\documentclass[12pt,a4paper]{article}

\usepackage{latexsym}
\usepackage{amsmath}
\usepackage{amssymb}
\usepackage{arydshln}

\usepackage{cite}
\usepackage{stmaryrd}
\usepackage{enumerate}

\usepackage{hyperref}

\usepackage{color}
\usepackage{lineno}
\usepackage{graphicx}
\usepackage{ae}
\usepackage{amsmath}
\usepackage{amssymb}
\usepackage{latexsym}
\usepackage{url}
\usepackage{epsfig}
\usepackage{mathrsfs}
\usepackage{amsfonts}
\usepackage{amsthm}

\usepackage{float}
\usepackage{subfig}
\usepackage{tikz}
\usetikzlibrary{positioning}

\newtheorem{theorem}{Theorem}[section]

\newtheorem{lemma}[theorem]{Lemma}

\newtheorem{cor}[theorem]{Corollary}

\theoremstyle{definition}

\newtheorem{claim}{Claim}

\newtheorem{problem}{Problem}

\numberwithin{equation}{section} %%  ?   ?    ?
\allowdisplaybreaks    %%  ?  ???

\def\qed{\hfill$\Box$\vspace{12pt}}

\long\def\delete#1{}

\usetikzlibrary{decorations.markings}

\tikzstyle{vertex}=[circle, draw, inner sep=0pt, minimum size=6pt]
\tikzstyle{directed}=[postaction={decorate,
	decoration={markings,mark=at position 0.3 with {\arrow{stealth}}}
}]%%
\usepackage{xcolor}
\usepackage[normalem]{ulem}

\begin{document}
\title {Spectral Tur{\'a}n problem for $t\mathcal{K}_{4}^{-}$-free unbalanced signed graphs}

\author{Linfeng Xie$^{a,b}$,~Xiaogang Liu$^{a,b,}$\thanks{Supported by the National Natural Science Foundation of China (No. 12371358).}~$^,$\thanks{ Corresponding author. Email addresses: xielinfeng@mail.nwpu.edu.cn, xiaogliu@nwpu.edu.cn}~
	\\[2mm]
	{\small $^a$School of Mathematics and Statistics,}\\[-0.8ex]
	{\small Northwestern Polytechnical University, Xi'an, Shaanxi 710072, P.R.~China}\\
	{\small $^b$Xi'an-Budapest Joint Research Center for Combinatorics,}\\[-0.8ex]
	{\small Northwestern Polytechnical University, Xi'an, Shaanxi 710129, P.R. China}
}
\date{}

\openup 0.5\jot
\maketitle

\begin{abstract}
	Let $tK_4$ denote the family of all graphs consisting of $t$ copies of $K_4$ that are allowed to share vertices and $t\mathcal{K}_{4}^{-}$ be the set of all unbalanced signed graphs whose underlying graphs are elements of $tK_4$. In this paper, we characterize the extremal graphs that achieve the maximum index and spectral radius among all $t\mathcal{K}_{4}^{-}$-free unbalanced signed graphs with given order.
	\smallskip
	
	\emph{Keywords:} Spectral Tur{\'a}n problem; Signed graph; Extremal graph; Index; Spectral radius
	
	\emph{Mathematics Subject Classification (2020):} 05C50, 05C22
\end{abstract}

\section{Introduction}
A \emph{signed graph} $\Gamma=(G,\sigma)$ consists of an unsigned graph $G=(V(G),E(G))$ and a sign function $\sigma : E(G)  \to \left \{+1, -1 \right \}$, where $G$ is called the \emph{underlying graph} of $\Gamma$. In 1946, Heider \cite{Hei-JP-1946} first introduced the concept of signed graphs in his study of balance theory in social psychology. Heider sought to explain how friendly/hostile relationships in social networks influence group stability. Based on Heider's work, in 1953, Harary \cite{Har-MichM-1953} first systematically established the mathematical foundation of signed graphs. To date, many results have been obtained on signed graphs. For instance, Chaiken \cite{Chaiken-SIAM-1982} and Zaslavsky \cite{Zaslavsky-DAM-1982} independently derived the matrix-tree theorem for signed graphs. For more information on signed graphs, readers may refer to \cite{Cam-Sei-Tsa-Algebra-1994, Zaslavsky-book-2010, Zaslavsky-EJC-2018}.

Let $\Gamma=(G,\sigma)$ be a signed graph with the vertex set $V(\Gamma)=\left \{v_{1},v_{2},\dots ,v_{n}\right \}$ and the edge set $E(\Gamma)=\left \{e_{1},e_{2},\dots ,e_{m}\right \}$. Denote by $|V(\Gamma)|$ and $|E(\Gamma)|$ the order and the size of $\Gamma$, respectively. Let $N_{\Gamma}(v_i)$ denote the set of neighbors of a vertex $v_i$ in $\Gamma$ and $d_{\Gamma}(v_i)=|N_{\Gamma}(v_i)|$ the degree of $v_i$ in $\Gamma$. An edge $v_i v_j$ is called a \emph{positive edge} (respectively, \emph{negative edge}) if $\sigma(v_i v_j)=+1$ (respectively, $\sigma(v_i v_j)=-1$). A \emph{subgraph} of $\Gamma$ is a subgraph that preserves the sign of every edge of $\Gamma$. The \emph{adjacency matrix} of $\Gamma$ is defined as $A(\Gamma)=(a_{ij}^{\sigma})$, where $a_{ij}^{\sigma}=\sigma(v_{i}v_{j})a_{ij}$ and $a_{ij}=1$ if $ v_{i} $ and $v_{j}$ are adjacent, and $a_{ij}=0$ otherwise. The eigenvalues of $A(\Gamma)$ are denoted by
$$
\lambda_{1}(\Gamma)\ge \lambda_{2}(\Gamma)\ge \dots \ge \lambda_{n}(\Gamma),
$$
which are called the \emph{adjacency spectrum} of $\Gamma$. The \emph{index} of $\Gamma$ is $\lambda_{1}(\Gamma)$. The \emph{spectral radius} of $\Gamma$ is defined as
$$
\rho(\Gamma)=\max\left \{\lambda_{1}(\Gamma),-\lambda_{n}(\Gamma)\right \}.
$$

Let $U$ be a nonempty proper subset of $V(\Gamma)$. The operation of reversing the signs of all edges between $U$ and $V(\Gamma) \setminus U$ is called a \emph{switching} of $\Gamma$. A signed graph $\Gamma^{\prime}$ is said to be \emph{switching equivalent} to $\Gamma$ if $ \Gamma^{\prime}$ can be obtained from $\Gamma$ by a finite sequence of switchings, denoted by $\Gamma\sim \Gamma^{\prime}$. A cycle is called \emph{positive} if the number of its negative edges is even; otherwise, \emph{negative}. A signed graph is called \emph{balanced} if each of its cycles is positive; otherwise, \emph{unbalanced}.

For a given family of signed graphs $\mathcal{F}$, a signed graph $\Gamma$ is called \emph{$\mathcal{F}$-free} if $\Gamma$ contains no subgraph isomorphic to any member of $\mathcal{F}$. The problem of determining the maximum index and spectral radius of an $\mathcal{F}$-free signed graph of order $n$ is of great interest, and the corresponding unsigned version was first termed the spectral Tur{\'a}n problem by Nikiforov \cite{Nikiforov-LAA-2010} in 2010. For more information on the unsigned version, we refer the readers to \cite{Nikiforov-LAA-2007, Ni-Wang-Kang-EJC-2023, Yuan-Wang-Zhai-EJLA-2012, Chen-Lei-Li-EJC-2025, Nikiforov-LAA-2017}.

Let $\mathcal{C}_{r}^-$ and $\mathcal{K}_{r}^-$ be the sets of all unbalanced signed graphs with underlying graphs $C_r$ and $K_r$, respectively. Recently, in 2024, Wang and Hou \cite{Wang-Hou-Li-LAA-2024} determined the maximum spectral radius of $\mathcal{C}_{3}^-$-free graphs. In the same year, Wang and Lin \cite{Wang-Lin-DAM-2024} gave the maximum index of $\mathcal{C}_{4}^-$-free graphs. In 2025, Wang, Hou and Huang  \cite{Wang-Hou-Huang-DAM-2025} determined the maximum index of $\mathcal{C}_{2k+1}^-$-free graphs for $3\le k\le \frac{n}{10}-1$. In 2024, Chen and Yuan \cite{Chen-Yuan-AMC-2024} determined the maximum spectral radius of $\mathcal{K}_{4}^-$-free graphs, and Wang \cite{Wang-LAA-2024} gave the maximum spectral radius of $\mathcal{K}_{5}^-$-free graphs. In the same year, Xiong and Hou \cite{Xiong-Hou-AC-2024} determined the maximum index of $\mathcal{K}_{r+1}^-$-free graphs for $3\le r\le n-1$.

Let $tK_{s+1}$ denote the family of all graphs consisting of $t$ copies of $K_{s+1}$ that are allowed to share vertices and $t\mathcal{K}_{s+1}^{-}$ be the set of all unbalanced signed graphs whose underlying graphs are elements of $tK_{s+1}$. Recently, Li and Qin \cite{Li-Qin-Ar-2025} determined the maximum index of $t\mathcal{K}_{3}^-$-free graphs. Motivated by this result, it is natural to consider the following problem.

\begin{problem}\label{Problem1-1-1}
	For  integers $t\ge 2$ and $s\ge 3$, what is the maximum index among all $t\mathcal{K}_{s+1}^{-}$-free unbalanced signed graphs?
\end{problem}

We would like to point out that if $t$ vertex-disjoint copies of $K_{s+1}$ appear in Problem \ref{Problem1-1-1}, by Theorem \ref{maximum index}, the signed complete graph with exactly one negative edge achieves the maximum index.

\begin{theorem}\emph{(See \cite[Theorem~4.7]{Brunetti-Stanic-CAM-2022})}\label{maximum index}
	Let $\Gamma$ be an unbalanced signed graph of order $n$, and let $\Gamma^{\prime}$ be a signed complete graph of order $n$ with exactly one negative edge. Then $\lambda_{1}(\Gamma)\le \lambda_{1}(\Gamma^{\prime})$ with equality holding if and only if $\Gamma \sim \Gamma^{\prime}$.
\end{theorem}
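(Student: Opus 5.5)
We need to prove Theorem \ref{maximum index}: among unbalanced signed graphs of order $n$, the index is maximized exactly by the switching class of $K_n$ with one negative edge.

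\emph{Step 1 (reduction to the complete underlying graph).} Let $\Gamma$ be unbalanced of order $n$ with unit eigenvector $x$ for $\lambda_1(\Gamma)$, so $\lambda_1(\Gamma)=\sum_{ij\in E}2\sigma(ij)x_ix_j$. After a suitable switching (which preserves spectrum and balance), we may assume $x\ge 0$ entrywise: switch at the set $U=\{i: x_i<0\}$, which replaces $x$ by $|x|$ and keeps $A$ similar. Now every non-edge $ij$ can be added as a positive edge without decreasing the Rayleigh quotient, since $2x_ix_j\ge 0$; the result is still unbalanced because the original negative cycle persists. Hence the maximum is attained on a signed complete graph, and for equality we track when $x_ix_j>0$.

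\emph{Step 2 (fewest negative edges).} With $x\ge 0$, flipping a negative edge to positive raises the quotient by $4x_ix_j\ge 0$. Keep flipping negative edges while the graph remains unbalanced. A signed $K_n$ that becomes balanced after flipping any single negative edge must be switching equivalent to one with exactly one negative edge; I would argue this by showing that in an unbalanced signed $K_n$ some negative triangle exists, and a minimal-frustration representative (switch to minimize negative edges; frustration index $\ge1$) dominates. Cleaner plan: switch $\Gamma$ so $x\ge0$, then compare directly with $\Gamma'$: choose the negative edge $uv$ of $\Gamma$ (one must exist, since an all-positive graph is balanced) minimizing $x_ux_v$; then
\[
\lambda_1(\Gamma)=x^{T}A(\Gamma)x\le x^{T}A(K_n)x-4x_ux_v=x^{T}A(\Gamma'_{uv})x\le\lambda_1(\Gamma'),
\]
where $\Gamma'_{uv}$ is $K_n$ with only $uv$ negative. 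This gives the inequality at once.

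\emph{Step 3 (equality).} If equality holds, then $x$ is an eigenvector of $\Gamma'_{uv}$ for its index, all dropped terms vanish: every missing edge and every extra negative edge $ij$ has $x_ix_j=0$. The main obstacle is ruling out zero entries of $x$. For $\Gamma'$, compute the eigenvector explicitly: by symmetry its index eigenvector has $x_u=x_v=a$ and remaining entries $b$, with $\lambda_1$ the larger root of a quadratic; one checks $a,b>0$ (for $n\ge3$, $\lambda_1(\Gamma')>n-3$ and the equations $(\lambda+1)a=(n-2)b$... yield positivity). Since $\lambda_1(\Gamma')$ is simple (checked from the quotient matrix and the remaining eigenvalues $-1$, $1$), $x$ must be this positive vector, so $x_ix_j>0$ for all pairs. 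Hence $\Gamma$ (after switching) has no missing edges and no negative edge other than $uv$, i.e. $\Gamma\sim\Gamma'$. The converse is immediate since switching preserves spectrum.
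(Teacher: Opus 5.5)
The paper gives no proof of this statement; it quotes it from \cite{Brunetti-Stanic-CAM-2022} and uses it as a black box. Your ``cleaner plan'' is a correct, self-contained proof, apart from one edge case discussed below. It uses the same device the paper relies on in Claims 2--4 of the proof of Theorem~\ref{tK4}: switch so that the index has a nonnegative eigenvector, then compare Rayleigh quotients. Writing $E^-$ for the set of negative edges, the whole inequality is the identity
\[
x^{T}A(\Gamma'_{uv})x-x^{T}A(\Gamma)x=\sum_{ij\notin E(\Gamma)}2x_ix_j+\sum_{ij\in E^-\setminus\{uv\}}4x_ix_j\ \ge 0 .
\]
This holds for \emph{any} negative edge $uv$, so choosing $uv$ to minimize $x_ux_v$ is unnecessary. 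Step 1 and the first half of Step 2, including the unproved remark about minimal-frustration representatives, are never used and should be deleted.

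For equality, you exploit the fact that the target graph is explicit. The spectrum of $\Gamma'$ consists of:
\begin{itemize}
\item the two roots of $\lambda^2-(n-4)\lambda-(3n-7)$;
\item the eigenvalue $1$, with eigenvector $e_u-e_v$;
\item the eigenvalue $-1$, with multiplicity $n-3$.
\end{itemize}
The index eigenvector of $\Gamma'$ is $(a,a,b,\dots,b)$ with $(\lambda_1+1)a=(n-2)b$. The paper's claims instead get strictness by subtracting eigen-equations and invoking ``at most one zero coordinate''. Your route is shorter and well suited to a statement whose extremal graph is this explicit.

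There is one slip: $\lambda_1(\Gamma')$ is not simple when $n=3$.
\begin{itemize}
\item The largest root $\frac{n-4+\sqrt{(n+6)(n-2)}}{2}$ equals $1$ at $n=3$, which coincides with the eigenvalue of $e_u-e_v$. The spectrum of the negative triangle is $\{1,1,-2\}$.
\item The eigenspace for $1$ then contains the nonnegative vector $(1,0,1)$ (coordinates ordered $u,v,w$). So your conclusion ``$x_ix_j>0$ for all pairs'' fails there.
\item For $n\ge 4$ the largest root exceeds $1$, and the other root is negative because the product of the two roots is $-(3n-7)$. So simplicity, and your equality argument, hold for all $n\ge 4$.
\end{itemize}
The fix is to state the simplicity claim only for $n\ge 4$ and treat small $n$ separately. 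The case $n=3$ is trivial: the only unbalanced signed graph of order $3$ is the negative triangle, which is switching equivalent to $\Gamma'$. For $n\le 2$ there is nothing to prove.
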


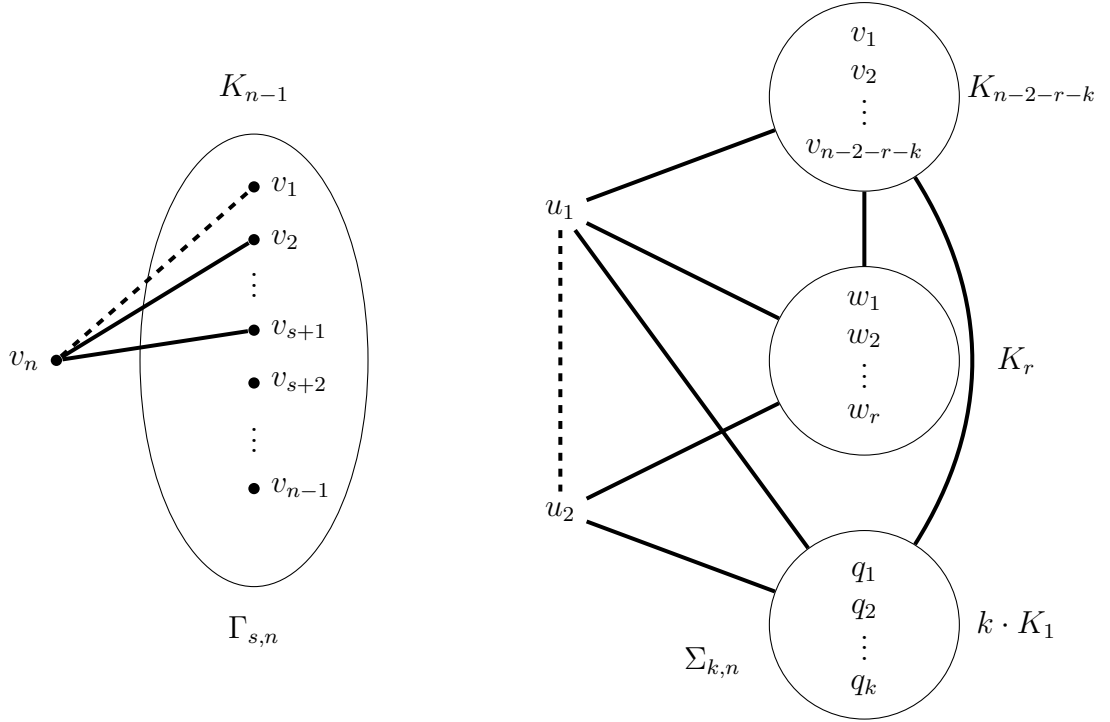
\begin{figure}[H]
	\hspace{-1.5cm} % 仅整体左移，不改变图形任何大小
	\begin{minipage}[c]{0.6\textwidth}
		\hspace{2.5cm}
		\begin{tikzpicture}[scale=1.0]
			
			% 椭圆（稍微放大）
			\draw (2,0) ellipse (1.5 and 3);
			
			% 标题
			\node at (2,3.6) {$K_{n-1}$};
			\node at (2,-3.6) {$\Gamma_{s,n}$};
			
			% 左侧顶点 u
			\node[circle,fill,inner sep=1.5pt,label=left:$v_n$] (v_n) at (-0.6,0) {};
			
			% 椭圆内顶点（整体稍微向中间收）
			\node[circle,fill,inner sep=1.5pt,label=right:$v_1$] (v1) at (2,2.3) {};
			\node[circle,fill,inner sep=1.5pt,label=right:$v_2$] (v2) at (2,1.6) {};
			\node at (2,1.1) {$\vdots$};
			\node[circle,fill,inner sep=1.5pt,label=right:$v_{s+1}$] (vtm) at (2,0.4) {};
			\node[circle,fill,inner sep=1.5pt,label=right:$v_{s+2}$] (vt) at (2,-0.3) {};
			\node at (2,-0.95) {$\vdots$};
			\node[circle,fill,inner sep=1.5pt,label=right:$v_{n-1}$] (v_{n-1}) at (2,-1.7) {}; % 上移
			
			% 连边
			\draw[line width=1.5pt][dashed] (v_n) -- (v1);
			\draw [line width=1.5pt](v_n) -- (v2);
			\draw [line width=1.5pt](v_n) -- (vtm);
		\end{tikzpicture}
	\end{minipage}
	\hfill
	\begin{minipage}[c]{1\textwidth}
		\begin{tikzpicture}[scale =1]
			% 左边两个顶点：u1、u2（距离已拉近）
			\node (u1) at (0,2)  {$u_{1}$};
			\node (u2) at (0,-2) {$u_{2}$};
			
			% 右边上方大圆：v1, v2, ..., vt
			\node at (6.2,3.6) {$K_{n-2-r-k}$};
			\node[circle, draw, minimum size=2.5cm] (V) at (4,3.5) {};
			\node at (4,4.3) {$v_1$};
			\node at (4,3.8) {$v_2$};
			\node at (4,3.4) {$\vdots$};
			\node at (4,2.8) {$v_{n-2-r-k}$};
			
			% 右边中间大圆：w1, w2, ..., wr
			\node at (6,0) {$K_{r}$};
			\node[circle, draw, minimum size=2.5cm] (W) at (4,0) {};
			\node at (4,0.8) {$w_1$};
			\node at (4,0.3) {$w_2$};
			\node at (4,-0.1) {$\vdots$};
			\node at (4,-0.7) {$w_r$};
			
			% 右边下方大圆：q1, q2, ..., qk
			\node at (6,-3.5) {$k\cdot K_1$};
			\node[circle, draw, minimum size=2.5cm] (Q) at (4,-3.5) {};
			\node at (4,-2.8) {$q_1$};
			\node at (4,-3.3) {$q_2$};
			\node at (4,-3.7) {$\vdots$};
			\node at (4,-4.3) {$q_k$};
			
			% 连线（和你原来代码格式一致）
			\draw [line width=1.5pt][dashed] (u1) -- (u2);
			\draw [line width=1.5pt]  (u1) -- (V);
			\draw [line width=1.5pt]  (u1) -- (W);
			\draw [line width=1.5pt] (u1) -- (Q);
			\draw [line width=1.5pt]  (u2) -- (W);
			\draw [line width=1.5pt]  (u2) -- (Q);
			\draw [line width=1.5pt]  (V) -- (W);
			\draw [line width=1.5pt, bend left=32] (V) to (Q);
			
			\node at (2,-4) {$\Sigma_{k,n}$};
		\end{tikzpicture}
		\hspace{2cm}
	\end{minipage}
	\caption{The signed graphs $\Gamma_{s,n}$ and $\Sigma_{k,n}$.}
	\label{figure1} 	
\end{figure}

As shown in Figure \ref{figure1}, let $K_{n-1}$ be a complete graph with vertex set $\left\{v_1,v_2,\dots,v_{n-1}\right\}$, and let $\Gamma_{s,n}$ be the signed graph obtained by adding a new vertex $v_n$ to $K_{n-1}$ and joining $v_n$ to the $s+1$ vertices $v_1,v_2,\dots,v_{s+1}$, where $v_n v_1$ is the unique negative edge.

In this paper, we investigate the case $s=3$ for Problem \ref{Problem1-1-1}. We state our main result as follows.

\begin{theorem}\label{tK4}
	Let $t\ge 2$ be a positive integer such that $r=\frac{1+\sqrt{8t-7}}{2}$ is an integer. If $\Gamma$ is a $t\mathcal{K}_{4}^{-}$-free unbalanced signed graph of sufficiently large order $n$, then
	\[
	\lambda_1(\Gamma )\le
	\begin{cases}
		\lambda_1(\Gamma_{r,n}), & \text{if}~~ 2 \le t \le \begin{pmatrix}
			n-2\\2
		\end{pmatrix}, \\[6pt]
		\lambda_1(\Gamma_{n-2,n}),   & \text{if}~~ t \ge\begin{pmatrix}
			n-2\\2
		\end{pmatrix}+1,
	\end{cases}
	\]
	with equality holding if and only if
	\[
	\Gamma \sim
	\begin{cases}
		\Gamma_{r,n}, & \text{if }~~ 2 \le t \le \begin{pmatrix}
			n-2\\2
		\end{pmatrix}, \\[6pt]
		\Gamma_{n-2,n},   & \text{if}~~ t \ge \begin{pmatrix}
			n-2\\2
		\end{pmatrix}+1.
	\end{cases}
	\]
\end{theorem}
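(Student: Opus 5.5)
The second case follows directly from Theorem \ref{maximum index}, so the work lies in the first case. Note that $\Gamma_{n-2,n}$ is $K_n$ with exactly one negative edge $v_nv_1$. Its unbalanced $K_4$'s are exactly the $4$-sets containing $v_n$ and $v_1$, so there are $\binom{n-2}{2}$ of them. Hence for $t\ge \binom{n-2}{2}+1$ it is $t\mathcal{K}_4^-$-free. Theorem \ref{maximum index} then gives the bound together with the equality case.

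For $2\le t\le\binom{n-2}{2}$, I first check that $\Gamma_{r,n}$ is admissible. With one negative edge $v_nv_1$, a $K_4$ is unbalanced if and only if it contains both $v_n$ and $v_1$. The other two vertices must lie in $N(v_n)\setminus\{v_1\}=\{v_2,\dots,v_{r+1}\}$, which induces $K_r$. So $\Gamma_{r,n}$ has exactly $\binom r2=t-1$ unbalanced $K_4$'s, since $r=\frac{1+\sqrt{8t-7}}2$ means precisely $\binom r2=t-1$. Any family in $t\mathcal{K}_4^-$ needs $t$ distinct copies, so $\Gamma_{r,n}$ is $t\mathcal{K}_4^-$-free, and $\lambda_1(\Gamma_{r,n})>n-2$ (by Rayleigh quotient with the all-ones-type vector on $K_{n-1}$).

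Now let $\Gamma$ be extremal. I switch $\Gamma$ so that a unit eigenvector $x$ for $\lambda_1$ is nonnegative, which is always possible. Then $\lambda_1=\sum_{ij\in E}2\sigma_{ij}x_ix_j$. From $\lambda_1>n-2$ and the standard eigen-equation estimates for $n$ large, I expect to derive three facts:
\begin{enumerate}[(i)]
\item $\Gamma$ misses only $O(t)$ edges of $K_n$;
\item all but boundedly many entries of $x$ are close to $1/\sqrt n$;
\item $\Gamma$ has exactly one negative edge $uv$.
\end{enumerate}
For (iii) the argument is this. If there were two negative edges, flipping one of them to positive (keeping some unbalanced cycle) or deleting it strictly increases $\sum 2\sigma x_ix_j$ without creating unbalanced $K_4$'s, since the set of unbalanced $K_4$'s can only shrink suitably. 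This contradicts maximality.

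With a single negative edge $uv$, the unbalanced $K_4$'s are exactly the sets $\{u,v,w_1,w_2\}$ with $w_1,w_2\in W:=N(u)\cap N(v)$ and $w_1w_2\in E$. So the only constraint is $e(G[W])\le t-1$, and maximality forces every edge not inside this count to be present. Consequently $G-\{u,v\}$ is complete, since an edge inside $W$ only adds to the count while edges outside $W$ are free. The vertices outside $W$ are then adjacent to at most one of $u,v$, and maximality makes each such vertex adjacent to exactly one of them. Writing $|W|=w$ forces $\binom w2\le t-1$, i.e.\ $w\le r$, and maximality gives $w=r$. The candidate graphs are therefore exactly the family $\Sigma_{k,n}$ of Figure \ref{figure1}, in which $u_1,u_2$ share the clique $K_r$ and the remaining $n-2-r$ vertices are split between them. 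Here $k$ vertices are adjacent to both $u_1$ and the big clique but not to $u_2$. The extreme split, where all remaining vertices go to one endpoint, is exactly $\Gamma_{r,n}$.

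The main obstacle is the final comparison: showing that $\lambda_1(\Sigma_{k,n})$ is maximized, over all splits, only at the extreme split giving $\Gamma_{r,n}$. I would use the equitable quotient matrix of the partition $\{u_1\},\{u_2\},K_r$, the vertices joined to $u_1$ only, and the vertices joined to $u_2$ only. This gives a $5\times5$ matrix, and $\lambda_1$ is its largest root. Comparing characteristic polynomials at $\lambda_1$ of the extreme configuration should show that moving one vertex from the smaller side to the larger side increases $\lambda_1$. Intuitively, this is because the negative edge weight $-2x_{u_1}x_{u_2}$ shrinks when $x_{u_2}$ becomes small. If the direct polynomial comparison is messy, I would instead use a Rayleigh-quotient moving argument: reattach a vertex from $u_2$'s side to $u_1$, assuming $x_{u_1}\ge x_{u_2}$, and use the eigenvector estimates to control the change. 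Equality is then characterized up to switching.
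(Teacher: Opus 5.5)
Your treatment of the case $t\ge\binom{n-2}{2}+1$ and your check that $\Gamma_{r,n}$ is admissible are correct. The first case, however, has a genuine gap exactly where the difficulty lies.

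After reducing to a single negative edge $uv$ with $W=N(u)\cap N(v)$, you rightly observe that the only constraint is $e(G[W])\le t-1$, and that every edge not inside $W$ can be added. The next step, ``consequently $G-\{u,v\}$ is complete'', does not follow. The edges inside $W$ are precisely the constrained ones, and nothing you say excludes a large $W$ with sparse $G[W]$. Examples are $G[W]=K_r\cup kK_1$ with no edges between the parts, or a star $K_{1,t-1}$ on $t$ vertices. Your deduction $\binom{w}{2}\le t-1$, hence $w=r$, presupposes that $G[W]$ is complete, and that is the main content of the extremal problem.

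Relatedly, you have misread $\Sigma_{k,n}$. By the figure and the adjacency matrix in Lemma~\ref{sigma_{k,n}}, the $k$ vertices $q_i$ are adjacent to \emph{both} $u_1$ and $u_2$. So they lie in your $W$, isolated there and nonadjacent to the $K_r$; they are not private neighbours of $u_1$. It is the big clique $K_{n-2-r-k}$ that forms the private neighbourhood of $u_1$. Thus $\Sigma_{k,n}$ is exactly the competitor your argument skips over.

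The paper closes this gap with an external result:
\begin{itemize}
\item Since $\lambda_1(\Gamma')\ge\lambda_1(\Gamma_{r,n})>\lambda_1(\Gamma_{r-1,n})$ by Corollary~\ref{Xiong-Hou-AC-2024-3}, Lemma~\ref{Xiong-Hou-AC-2024-1} forces an unbalanced $K_{r+2}$. This must contain the negative edge, so $G[W]$ contains a $K_r$.
\item This exhausts the budget $\binom{r}{2}=t-1$. Hence the rest of $W$ is independent and anticomplete to that $K_r$, i.e.\ the graph is $\Sigma_{k,n}$.
\item Lemma~\ref{sigma_{k,n}} excludes $k\ge 2$.
\item For $k=1$, with $x_v\le x_u$, the shift ``delete $qv$, join $q$ to the $K_r$'' is shown to raise $\lambda_1$ using the explicit eigen-equations of $\Sigma_{1,n}$.
\end{itemize}
Avoiding Lemma~\ref{Xiong-Hou-AC-2024-1} would require making this shift work for arbitrary $G[W]$, i.e.\ proving $\sum_{w\in W\setminus(N(q)\cup\{q\})}x_w>x_v$ in general. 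That is not automatic, and you do not attempt it.

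By contrast, what you call the main obstacle is easy. For the split of the remaining vertices between private neighbours of $u$ and of $v$, assume $x_v\le x_u$. Moving a private neighbour $p$ of $v$ to $u$ changes $\mathbf{x}^{T}A\mathbf{x}$ by $2x_p(x_u-x_v)\ge 0$, with strictness from the eigen-equation at $u$ (the paper's Claim~6).

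Two smaller points.
\begin{itemize}
\item In (iii), flipping a negative edge can create unbalanced $K_4$'s. A $K_4$ whose negative edges form a star is balanced, and flipping one of them unbalances it. Deleting a negative edge instead may destroy every negative cycle. You need the paper's device: first locate a negative triangle via Lemma~\ref{Wang-Hou-Li-LAA-2024}, delete the negative edges off it, and flip two of its edges only if all three are negative.
\item Claim (i) is false as stated, since $\Gamma_{r,n}$ itself misses $n-2-r$ edges.
\end{itemize}
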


The paper is organized as follows. In Section $2$, we present some lemmas to be used in the proof of Theorem \ref{tK4}. In Section $3$, we give the proof of Theorem \ref{tK4} and characterize the extremal graphs that achieve the maximum spectral radius among all $t\mathcal{K}_{4}^{-}$-free unbalanced signed graphs with given order.

\section{Preliminaries}

\begin{lemma}\emph{(See \cite[Proposition~1.11]{Xiong-Hou-AC-2024})}\label{Xiong-Hou-AC-2024-2}
	Let $\Gamma_{s,n}$ be the signed graph of order $n$ as shown in Figure \ref{figure1}. Then $\lambda_1(\Gamma_{s,n})$ is the largest root of the polynomial
	$$
	f_{s,n}(\lambda)=\lambda^{3}-(n-3)\lambda^{2}-(n+s-1)\lambda-s^2+n+ns-3
	$$
	and satisfies
	$$
	n-2\le \lambda_1(\Gamma_{s,n})<n-1,
	$$
	with equality on the left if and only if $s=1$.
\end{lemma}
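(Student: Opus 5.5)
The plan is to compute the spectrum of $A(\Gamma_{s,n})$ exactly, using an equitable partition adapted to the signs. We partition $V(\Gamma_{s,n})$ into four cells: $X=\{v_n\}$, $Y=\{v_1\}$, $Z=\{v_2,\dots,v_{s+1}\}$ and $W=\{v_{s+2},\dots,v_{n-1}\}$, where $1\le s\le n-2$. For each vertex, the \emph{signed} row sum of $A(\Gamma_{s,n})$ over each cell depends only on the cell containing that vertex. Reading off these sums gives the quotient matrix
$$
B=\begin{pmatrix} 0&-1&s&0\\ -1&0&s&n-s-2\\ 1&1&s-1&n-s-2\\ 0&1&s&n-s-3\end{pmatrix}.
$$
Vectors constant on cells are invariant under $A$, so every eigenvalue of $B$ is an eigenvalue of $A(\Gamma_{s,n})$. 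Now take vectors supported on $Z$ or on $W$ whose entries sum to zero within that cell. Since $Z$ and $W$ induce cliques and their vertices have identical neighbourhoods outside the cell, these vectors are eigenvectors for $-1$. They span a space of dimension $(s-1)+(n-s-3)=n-4$.

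Hence the spectrum of $\Gamma_{s,n}$ is the spectrum of $B$ together with $-1$ of multiplicity $n-4$. The next step is to expand $\det(\lambda I-B)$ and check that it factors as $(\lambda+1)f_{s,n}(\lambda)$. This is a routine $4\times 4$ determinant; one can first verify that $-1$ is a root and then divide. Since $f_{s,n}$ is monic, $f_{s,n}(n-2)\le 0$ (computed below) forces a root at least $n-2>-1$. Therefore $\lambda_1(\Gamma_{s,n})$ is the largest root of $f_{s,n}$.

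For the bounds we evaluate $f_{s,n}$ directly. A short computation gives
$$
f_{s,n}(n-2)=-(s-1)^2\le 0,
$$
so the largest root satisfies $\lambda_1(\Gamma_{s,n})\ge n-2$. For the upper bound, note that $\lambda_1$ of a signed graph is at most the index of its underlying graph, by Perron--Frobenius applied to $|x|$. The underlying graph here is $K_n$ minus at least one edge, because $s\le n-2$, so its index is strictly less than $n-1$. As a check, $f_{s,n}(n-1)=(n-2)(n+1)-s(s-1)>0$.

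The equality case is the main point requiring care. If $s\ne 1$, then $f_{s,n}(n-2)<0$ and so the largest root is strictly greater than $n-2$. If $s=1$, then $n-2$ is a root, and we must confirm it is the largest one. To do this, write $f_{1,n}(\lambda)=(\lambda-(n-2))q(\lambda)$ and check that the quadratic $q$ has both roots below $n-2$. Here
$$
q(\lambda)=\lambda^2+\lambda-2=(\lambda+2)(\lambda-1),
$$
whose roots $-2$ and $1$ are indeed smaller than $n-2$ for $n\ge 4$, which settles the equality case.
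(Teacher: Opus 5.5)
The paper gives no proof of this lemma. It simply quotes Xiong and Hou, so your argument is a self-contained substitute rather than a reconstruction, and it is correct.

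I checked the algebra:
\begin{itemize}
\item The partition $\{v_n\},\{v_1\},\{v_2,\dots,v_{s+1}\},\{v_{s+2},\dots,v_{n-1}\}$ is equitable for the signed matrix. Separating $v_1$ from the other neighbours of $v_n$ is exactly what the negative edge forces.
\item The vector $(0,1,1,-2)$ is a left null vector of $B+I$, and dividing out $\lambda+1$ indeed gives $\det(\lambda I-B)=(\lambda+1)f_{s,n}(\lambda)$.
\item The evaluations $f_{s,n}(n-2)=-(s-1)^2$, $f_{1,n}(\lambda)=(\lambda-n+2)(\lambda+2)(\lambda-1)$ and $f_{s,n}(n-1)=(n-2)(n+1)-s(s-1)$ all hold.
\end{itemize}

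Your method is the same quotient-matrix technique the paper itself uses for $\Sigma_{k,n}$ in Lemma~\ref{sigma_{k,n}}. The difference is that you identify the remaining eigenvalue $-1$ by exhibiting zero-sum eigenvectors on the two clique cells directly, instead of invoking Lemma~\ref{Brouwer-Haemers-Book-2011}(ii). That is more elementary and yields the full spectrum: the roots of $f_{s,n}$ together with $-1$ of multiplicity $n-3$. The citation buys brevity. Your route buys a transparent equality case via the identity $f_{s,n}(n-2)=-(s-1)^2$.

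There is one off-by-one slip. You take $1\le s\le n-2$ and assert that the underlying graph is $K_n$ minus at least one edge ``because $s\le n-2$''. But $v_n$ misses exactly $n-s-2$ vertices, so for $s=n-2$ the underlying graph is $K_n$ itself. Your Perron--Frobenius comparison then only gives $\lambda_1\le n-1$. Your dimension count $n-s-3$ for the last cell also presupposes $s\le n-3$.

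You can repair this in one of three ways:
\begin{itemize}
\item Restrict to $s\le n-3$, which is what Figure~\ref{figure1} depicts.
\item Finish your ``check''. The inequality $f_{s,n}(n-1)>0$ alone does not suffice, since a monic cubic could still have two roots beyond $n-1$. However, $f_{s,n}'(\lambda)=3\lambda^2-2(n-3)\lambda-(n+s-1)$ equals $n^2+n-2-s>0$ at $\lambda=n-1$ and is increasing there. So $f_{s,n}$ has no root in $[n-1,\infty)$, uniformly in $s$.
\item Use the strict form of the comparison: for connected $G$, $\lambda_1(\Gamma)=\lambda_1(G)$ forces $\Gamma$ to be balanced, whereas $\Gamma_{s,n}$ contains the negative triangle $v_nv_1v_2$.
\end{itemize}
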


By Lemma \ref{Xiong-Hou-AC-2024-2}, we obtain the following corollary.

\begin{cor}\label{Xiong-Hou-AC-2024-3}
	$\lambda_1(\Gamma_{s,n})<\lambda_1(\Gamma_{s+1,n})$ for $s\ge 1$.
\end{cor}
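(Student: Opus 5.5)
Write $\lambda_s$ for $\lambda_1(\Gamma_{s,n})$, the largest root of $f_{s,n}(\lambda)=\lambda^{3}-(n-3)\lambda^{2}-(n+s-1)\lambda-s^2+n+ns-3$ (Lemma \ref{Xiong-Hou-AC-2024-2}). To prove Corollary \ref{Xiong-Hou-AC-2024-3} I will compare the two polynomials $f_{s,n}$ and $f_{s+1,n}$ at the point $\lambda_s$.

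First compute the difference:
\[
f_{s+1,n}(\lambda)-f_{s,n}(\lambda)=-\lambda-(2s+1)+n .
\]
Evaluating at $\lambda_s$, where $f_{s,n}(\lambda_s)=0$, gives
\[
f_{s+1,n}(\lambda_s)=n-(2s+1)-\lambda_s .
\]
By Lemma \ref{Xiong-Hou-AC-2024-2}, $\lambda_s\ge n-2$. Since $s\ge 1$, we have $n-(2s+1)\le n-3$. Therefore
\[
f_{s+1,n}(\lambda_s)\le n-3-(n-2)=-1<0 .
\]

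Now $f_{s+1,n}$ is a monic cubic, so $f_{s+1,n}(\lambda)\to+\infty$ as $\lambda\to\infty$. Since it is negative at $\lambda_s$, it has a real root strictly larger than $\lambda_s$. Its largest root is $\lambda_{s+1}$, again by Lemma \ref{Xiong-Hou-AC-2024-2}, so $\lambda_{s+1}>\lambda_s$.

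One point needs checking: Lemma \ref{Xiong-Hou-AC-2024-2} must apply to $\Gamma_{s+1,n}$, which requires $s+2\le n-1$. This holds whenever both graphs are defined, and in our setting $n$ is large. Beyond that, the only step that needs care is the sign estimate $f_{s+1,n}(\lambda_s)<0$, and it follows directly from the lower bound $\lambda_s\ge n-2$.
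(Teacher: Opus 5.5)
Your proposal is correct and follows the paper's proof: you compute $f_{s+1,n}(\lambda)-f_{s,n}(\lambda)=n-2s-1-\lambda$, evaluate it at $\lambda_1(\Gamma_{s,n})\ge n-2$ to get a value at most $1-2s<0$, and conclude that $\lambda_1(\Gamma_{s+1,n})>\lambda_1(\Gamma_{s,n})$. You also make explicit two points the paper leaves implicit: a monic cubic that is negative at $\lambda_1(\Gamma_{s,n})$ has its largest root strictly beyond that point, and $\Gamma_{s+1,n}$ is only defined when $s+2\le n-1$.
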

\begin{proof}
	By Lemma \ref{Xiong-Hou-AC-2024-2}, we have
	\[
	f_{s+1,n}(\lambda)=\lambda^{3}-(n-3)\lambda^{2}-(n+s)\lambda-s^2-2s+2n+ns-4.	
	\]
	Then
	\[
	f_{s+1,n}(\lambda)-f_{s,n}(\lambda)=-\lambda+n-2s-1.
	\]
	Since $s\ge 1$ and $\lambda_1(\Gamma_{s,n})\ge n-2$, we have
	\begin{align*} f_{s+1,n}(\lambda_1(\Gamma_{s,n}))-f_{s,n}(\lambda_1(\Gamma_{s,n}))&=-\lambda_1(\Gamma_{s,n})+n-2s-1\\
		&\le -2s+1\\
		&<0.
	\end{align*}
	Hence, $\lambda_1(\Gamma_{s+1,n})>\lambda_1(\Gamma_{s,n})$. \qed
\end{proof}

\begin{lemma}\emph{(See \cite[Theorem~1.10]{Xiong-Hou-AC-2024})}\label{Xiong-Hou-AC-2024-1}
	If $\Gamma$ is a $\mathcal{K}_{s+1}^{-}$-free $(3\le s\le n-1)$ unbalanced signed graph of order $n$, then
	$$
	\lambda_1(\Gamma)\le \lambda_1(\Gamma_{s-2,n}),
	$$
	with equality holding if and only if $\Gamma \sim \Gamma_{s-2,n}$.
\end{lemma}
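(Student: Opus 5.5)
The plan is to argue on an extremal graph, using the Perron-type eigenvector after a switching. Since $s\ge 3$, the graph $\Gamma_{s-2,n}$ is defined with $s-2\ge 1$.

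\textbf{Step 1: the comparison graph is admissible.} First I will check that $\Gamma_{s-2,n}$ is itself $\mathcal{K}_{s+1}^{-}$-free and unbalanced. Every negative cycle of $\Gamma_{s-2,n}$ passes through $v_n$. The vertex $v_n$ has degree $s-1$, so any clique containing it has at most $s$ vertices. Hence every $K_{s+1}$ in $\Gamma_{s-2,n}$ avoids $v_n$, is all-positive, and is balanced.

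\textbf{Step 2: set-up on an extremal graph.} Let $\Gamma$ maximize $\lambda_1$ among $\mathcal{K}_{s+1}^{-}$-free unbalanced signed graphs of order $n$. By Step 1 and Lemma \ref{Xiong-Hou-AC-2024-2}, $\lambda_1(\Gamma)\ge \lambda_1(\Gamma_{s-2,n})\ge n-2$. Take a unit eigenvector $x$ for $\lambda_1(\Gamma)$ and switch at the vertices where $x$ is negative, so that $x\ge 0$. Switching preserves the spectrum, balance, and the family $\mathcal{K}_{s+1}^{-}$, so we may assume $x\ge 0$. Then $\lambda_1=x^{T}A(\Gamma)x$. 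Since $\Gamma$ is unbalanced, it keeps at least one negative edge in every switching class.

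\textbf{Step 3: coarse structure from $\lambda_1\ge n-2$.} Using the underlying graph $G$ and $\lambda_1(\Gamma)\le\lambda_1(G)$, together with standard eigenvector estimates, I will show the following.
\begin{itemize}
\item[(a)] All but $O(1)$ pairs are positive edges.
\item[(b)] Every entry satisfies $x_w\ge c$ for a constant $c$, for instance via $\lambda_1 x_w=\sum_{z\sim w}\sigma(wz)x_z$ and $\lambda_1\ge n-2$.
\item[(c)] There is essentially one negative edge $uv$.
\end{itemize}
For (c), each negative edge costs about $4x_ux_v$ in the Rayleigh quotient compared with a positive edge. So if a second negative edge existed, I would make it positive. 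If that created an unbalanced $K_{s+1}$, I would instead delete a suitable edge, and show $x^{T}Ax$ strictly increases, contradicting maximality.

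\textbf{Step 4: reduction to the common neighbourhood.} Once $uv$ is the unique negative edge, every $K_{s+1}$ through $u,v$ is unbalanced. So the constraint becomes exactly this: $H=G[N(u)\cap N(v)]$ is $K_{s-1}$-free. All other pairs can be made positive edges without creating an unbalanced $K_{s+1}$, so by maximality they are present. Thus $\Gamma$ is $K_n$ with $uv$ negative, some edges from $u$ or $v$ to vertices of $W=V\setminus\{u,v\}$ deleted, and the common neighbourhood $H$ $K_{s-1}$-free.

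\textbf{Step 5: the comparison (main obstacle).} I must show the optimum is to leave $v$ adjacent only to $s-2$ vertices of $W$, which form a clique, while $u$ is joined to all of $W$. This is exactly $\Gamma_{s-2,n}$. The competitor is a large common neighbourhood that is $K_{s-1}$-free, paid for by deleting internal edges. I will compare $x^{T}Ax$ under both options using the eigen-equations.
\begin{itemize}
\item Deleting an edge inside $W$ costs $2x_ax_b\approx 2c^2$.
\item Keeping a negative path $v$--$w$--$u$ contributes to $\lambda_1$ only at order $x_v$.
\end{itemize}
The equations $\lambda_1x_v=\sum_{w\in N(v)}x_w-x_u$ and $\lambda_1\approx n-2$ should force $x_v$ to be small, of order $|N(v)|/n$, unless $v$ has linear degree. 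If $v$ has linear degree, then by Tur\'an's theorem $H$ misses quadratically many edges, which costs more than the gain. Making this trade-off rigorous for every $n$ with $s\le n-1$, not just large $n$, is the delicate part. There I would fall back on the explicit cubic $f_{s-2,n}$ from Lemma \ref{Xiong-Hou-AC-2024-2} and equitable-partition quotient matrices of the few candidate extremal shapes.

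**Equality.** The equality case $\Gamma\sim\Gamma_{s-2,n}$ follows from the strictness of each improvement step.
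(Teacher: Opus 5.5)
The paper does not prove this lemma; it is quoted from \cite{Xiong-Hou-AC-2024}. However, Claims 1--8 in the paper's proof of Theorem~\ref{tK4} carry over directly to $\mathcal{K}_{s+1}^{-}$ and are the rigorous form of your Steps 2--4. Measured against them, several of your intermediate claims fail.

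\emph{Step 3(a),(b).} These are false for the target graph itself. $\Gamma_{s-2,n}$ has $n-s$ non-edges, so (a) fails. Its eigenvector entry at $v_n$ is $\lambda_1^{-1}\bigl(\sum_{j=2}^{s-1}x_j-x_1\bigr)$, of relative size about $(s-3)/n$, so (b) fails. For $s=3$ one even has $\lambda_1(\Gamma_{1,n})=n-2$ with simple eigenvector $(1,\dots,1,0)^{T}$. This also contradicts your own Step 5, where $x_v$ is small. What $\lambda_1\ge n-2$ really gives is that $\mathbf{x}$ has at most one zero coordinate, since two zeros force $\lambda_1\le n-3$. Strict improvements must then be certified by comparing eigen-equations, not by $x_ax_b\ge c^2$.

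\emph{Step 3(c).} Deleting a negative edge may make the graph balanced, and flipping one may create an unbalanced $K_{s+1}$, so ``delete a suitable edge'' is not an argument. The paper's fix is to anchor on a negative triangle, which exists by Lemma~\ref{Wang-Hou-Li-LAA-2024} since $n-2>\frac12(\sqrt{n^2-8}+n-4)$. It then deletes every other negative edge. If the triangle is all-negative, it flips two of its edges at a common vertex and checks that every unbalanced clique afterwards was already unbalanced.

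\emph{Step 4.} Pairs inside $N(u)\cap N(v)$ cannot be added. Also, $N(v)\setminus\{u\}\subseteq N(u)$ requires $x_v\le x_u$ together with rotating edges from $v$ to $u$ (Claim 6).

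\emph{Step 5.} This is the decisive gap: it is the whole content of the lemma and is left as a heuristic. The Tur\'an estimate needs $s$ fixed and $n$ large, but the lemma covers all $3\le s\le n-1$. For $s$ of order $n$, a $K_{s-1}$-free graph on $a$ vertices need only miss about $a^2/(2(s-2))$ edges, and a single edge when $s=n-1$, so no quadratic loss occurs. Moreover, $G[N(u)\cap N(v)]$ may be any $K_{s-1}$-free graph, so there are not ``few candidate shapes'' to feed into quotient matrices.

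The missing idea is a move that shrinks $A=N(u)\cap N(v)$. By Claims 6--8, you may assume $u$ is adjacent to all vertices, $v$ only to $u$ and $A$, and $B=V\setminus(A\cup\{u,v\})$ is a clique complete to $A$.
\begin{itemize}
\item Pick $z\in A$ with a non-neighbour in $A$, delete $vz$, and join $z$ to all its non-neighbours in $A$. The result is still unbalanced and $\mathcal{K}_{s+1}^{-}$-free.
\item $\mathbf{x}^{T}A(\Gamma)\mathbf{x}$ changes by $2x_z(S_z-x_v)$, where $S_z=\sum_{y\in A\setminus(N(z)\cup\{z\})}x_y$ and $x_z>0$.
\item The eigen-equations give $(\lambda_1-1)(x_u-x_v)=\sum_{b\in B}x_b\ge 0$ and $(\lambda_1+1)x_z=(\lambda_1+1)x_v+2x_u+\sum_{b\in B}x_b-S_z$.
\item Suppose $S_z\le x_v$ for every such $z$. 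For non-adjacent $z,z'\in A$ you get $x_v\ge S_z\ge x_{z'}\ge(\lambda_1x_v+2x_u)/(\lambda_1+1)$. Hence $x_v\ge 2x_u\ge 2x_v$, so $x_u=x_v=0$, contradicting the one-zero property.
\end{itemize}
So a maximiser has $G[A]$ complete, hence $|A|\le s-2$ and it is $\Gamma_{|A|,n}$. Corollary~\ref{Xiong-Hou-AC-2024-3} then forces $|A|=s-2$ and gives the equality case. Without this, or an equivalent argument valid for all $3\le s\le n-1$, your proposal does not establish the bound.
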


\begin{lemma}\emph{(See \cite[Theorem~1.3]{Wang-Hou-Li-LAA-2024})}\label{Wang-Hou-Li-LAA-2024}
	Let $\Gamma$ be a connected unbalanced signed graph of order $n$. If $\Gamma$ is $C_3^-$-free, then
	$$
	\rho(\Gamma)\le \frac{1}{2}\left(\sqrt{n^{2}-8}+n-4\right).
	$$
\end{lemma}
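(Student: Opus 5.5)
The plan is to reduce the problem to a structural restriction on the underlying graph forced by an unbalanced cycle of minimum length. Then bound $\rho(\Gamma)$ through an eigenvector equation combined with an equitable-partition (quotient matrix) computation.

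\textbf{Structure.} Since $\Gamma$ is unbalanced it contains a negative cycle. Let $C=v_1v_2\cdots v_k v_1$ be a shortest one. Because $\Gamma$ is $C_3^-$-free, every triangle is positive, so $k\ge 4$. Moreover $C$ has no chord: a chord would split $C$ into two shorter cycles whose sign product is $\sigma(C)=-1$, so one of them would be a shorter negative cycle. Next take $w\notin V(C)$ and suppose $w$ is adjacent to every vertex of $C$. The triangles $wv_iv_{i+1}$ are all positive, and the product of their signs equals $\sigma(C)=-1$, which is a contradiction. Hence every vertex outside $C$ misses at least one vertex of $C$. More refined versions of the same argument, applied to the shorter cycles through $w$, should show that the non-neighbours of $w$ on $C$ are numerous when $k$ is large. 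So the underlying graph $G$ is a subgraph of a graph $H$ obtained from $K_n$ by deleting all $\binom{k}{2}-k$ chords of $C$ and at least one edge from each outside vertex to $C$. Finally I would switch $\Gamma$ so that all edges outside a small set are positive. This step is needed because the bound has to use the signs, not only $G$: for the extremal configuration, $\rho$ of the underlying graph alone is too large.

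\textbf{Spectral bound.} Let $\lambda\in\{\lambda_1,\lambda_n\}$ attain $\rho(\Gamma)$ and let $x$ be a unit eigenvector. The two cases are handled separately, since negating all signs does not preserve $C_3^-$-freeness.
\begin{itemize}
\item For $\lambda_1$, I would use a Rayleigh-quotient argument. Using the forced non-edges and at least one negative edge on $C$, I would show that $x^{T}A(\Gamma)x$ is at most the largest eigenvalue of an explicit quotient matrix.
\item For $-\lambda_n$, I would use the crude bound $-\lambda_n\le \lambda_1(G)$ with $G$ restricted as above. I expect this to fall well below the target once $k\ge 4$ forces $n-k+2$ missing edges, at least for $n$ beyond a small threshold; the small cases would be checked directly.
\end{itemize}

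\textbf{Reduction to the extremal case.} The candidate extremal configuration is $k=4$, with all forced non-edges concentrated so that the equitable partition has three classes: $\{v_1\}$, $\{v_2,v_4\}$, and the remaining $n-3$ vertices forming a clique with one negative edge on $C$. I would compute its quotient matrix and check that $\rho$ is a root of $\lambda^2-(n-4)\lambda-2(n-3)=0$, which gives $\tfrac12(\sqrt{n^2-8}+n-4)$. For $k\ge5$, the many missing chords should give a strictly smaller value by monotonicity of $\lambda_1$ under edge addition, applied to the nonnegative matrix $|A|$.

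\textbf{Main obstacle.} The hardest step is the $\lambda_1$ case. There one must show that, among all placements of the forced non-edges and sign patterns, the concentrated configuration above maximizes the index. I would first try a Kelmans-type shifting argument on the eigenvector entries: compare $x_u$ and $x_{u'}$ and move missing edges toward the vertex of smallest $|x|$, verifying at each step that no negative triangle is created and that the cycle $C$ stays negative.
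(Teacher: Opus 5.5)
The paper does not prove this lemma; it is quoted from Wang--Hou--Li. Your proposal therefore has to stand on its own, and as written it has two genuine gaps, both in the case $k=4$.

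\textbf{What works.} Your structural facts are correct: a shortest negative cycle $C$ is chordless of length $k\ge 4$. The same sign-product argument, applied to the cycles through an outside vertex $w$ and two of its neighbours on $C$, shows that $w$ has at most three consecutive neighbours on $C$. For $k\ge5$ this forces at least $2n-5$ missing edges, and an edge-count bound on the unsigned index (e.g.\ Stanley's) then settles both $\lambda_1$ and $-\lambda_n$, up to trivial small cases.

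\textbf{First gap: the $-\lambda_n$ case.} The target $\frac12(\sqrt{n^2-8}+n-4)$ is exactly $\lambda_1(K_{n-1}-e)$. Your candidate has underlying graph $K_{n-1}-e$ plus a vertex $v_1$ joined to the two ends of the missing edge, which is a connected proper supergraph. So its unsigned index strictly exceeds the target, and the step $-\lambda_n(\Gamma)\le\lambda_1(G)$ cannot succeed when $k=4$. Your own remark that the underlying graph's index is too large already refutes the expectation in that bullet. The repair is to use the signs here too. Every triangle of $-\Gamma$ is negative, so $\omega_b(-\Gamma)\le 2$, and Lemma~\ref{Wang-Yan-Qian-LAA-2021} gives $-\lambda_n(\Gamma)=\lambda_1(-\Gamma)\le n/2$. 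This is below the target for $n\ge5$; for $n=4$, $\Gamma$ is a negative $C_4$ and equality holds. Equivalently, switch $-\Gamma$ to have a nonnegative eigenvector (Lemma~\ref{Sun-Liu-Lan-LAA-2022}); its positive edges form a triangle-free graph, whose index is at most $n/2$.

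\textbf{Second gap: the $\lambda_1$ case for $k=4$.} This is the real content of the lemma, and you leave it as a plan. Neither $G$ nor the positive-edge subgraph carries the bound: in your candidate with negative edge $v_1v_2$, $G-v_1v_2$ is still a connected proper supergraph of $K_{n-1}-e$. So the negative edge must be used through $x^{T}A(\Gamma)x$, and every modification must keep the graph connected, unbalanced and free of negative triangles.

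Your container $H$ omits forced non-edges, and this is exactly where a naive Kelmans shift breaks. If $w$ misses $v_1$ and $w'$ misses $v_3$, the triangles $ww'v_2$ and $ww'v_4$ have sign product $\sigma(C)=-1$, so $ww'$ cannot be an edge. A shift that changes which vertex of $C$ an outside vertex misses can create precisely such a negative triangle. Your claim that one can switch so that all but a small set of edges are positive is also unsupported.

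What is missing is an extremal argument of the kind used for Theorem~\ref{tK4}:
\begin{enumerate}
\item take $\Gamma$ maximizing $\lambda_1$ in the class;
\item switch to a nonnegative eigenvector;
\item use Rayleigh-quotient edge additions and rotations that provably stay in the class, to show that an extremal graph is, up to switching, your candidate.
\end{enumerate}
In that candidate, $x_{v_1}=0$ and $\lambda_1=\lambda_1(K_{n-1}-e)$.

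A smaller point about the candidate itself: its single negative edge must be $v_1v_2$ or $v_1v_4$, since a negative $v_2v_3$ would lie in negative triangles $wv_2v_3$. Hence $\{v_1\},\{v_2,v_4\}$, rest is not an equitable partition of the signed matrix. Your quadratic is really the two-class quotient of $K_{n-1}-e$.
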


\begin{lemma}\emph{(See \cite[Lemma~2.5]{Sun-Liu-Lan-LAA-2022})}\label{Sun-Liu-Lan-LAA-2022}
	Let $\Gamma$ be a signed graph of order $n$. Then there exists a signed graph $\Gamma^{\prime}$ switching equivalent to $\Gamma$ for which the largest eigenvalue $\lambda_{1}(\Gamma^{\prime})$ admits a non-negative eigenvector.
\end{lemma}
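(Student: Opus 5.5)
The plan is to realize switching as a signature similarity and then flip the signs of an eigenvector coordinatewise. First I will record the standard fact about switching. For $U\subseteq V(\Gamma)$, let $D_U=\mathrm{diag}(d_1,\dots,d_n)$ with $d_i=-1$ if $v_i\in U$ and $d_i=+1$ otherwise. Then $D_U A(\Gamma) D_U$ has $(i,j)$-entry $d_i d_j a^{\sigma}_{ij}$. This reverses exactly the signs of the edges with one end in $U$ and the other in $V(\Gamma)\setminus U$. Hence, for a nonempty proper subset $U$, the matrix $D_U A(\Gamma) D_U$ is the adjacency matrix of the signed graph obtained from $\Gamma$ by switching at $U$. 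Since $D_U^{-1}=D_U$, this is a similarity, so $\Gamma$ and the switched graph have the same spectrum, and in particular the same $\lambda_1$.

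Next, let $x=(x_1,\dots,x_n)^{T}$ be a real eigenvector of $A(\Gamma)$ for $\lambda_1(\Gamma)$; it exists because $A(\Gamma)$ is real symmetric. Put $U=\{v_i : x_i<0\}$. There are three cases.
\begin{itemize}
\item If $U=\emptyset$, then $x$ is already non-negative and I take $\Gamma'=\Gamma$.
\item If $U=V(\Gamma)$, then $-x$ is a non-negative eigenvector for $\lambda_1(\Gamma)$, and again I take $\Gamma'=\Gamma$.
\item Otherwise $U$ is a nonempty proper subset, and I let $\Gamma'$ be $\Gamma$ switched at $U$, so that $A(\Gamma')=D_U A(\Gamma) D_U$.
\end{itemize}

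In the third case I compute
\[
A(\Gamma')(D_U x)=D_U A(\Gamma) D_U D_U x = D_U A(\Gamma)x=\lambda_1(\Gamma) D_U x .
\]
The vector $D_U x$ has $i$-th entry $|x_i|$, so it is non-negative and nonzero. By the similarity, $\lambda_1(\Gamma')=\lambda_1(\Gamma)$. Therefore $\lambda_1(\Gamma')$ admits the non-negative eigenvector $D_U x=(|x_1|,\dots,|x_n|)^{T}$, and $\Gamma'\sim\Gamma$, which proves the lemma.

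There is no real obstacle here. The only points needing care are checking that conjugation by $D_U$ is exactly switching at $U$, and handling the degenerate choices $U=\emptyset$ and $U=V(\Gamma)$, since switching is defined only for nonempty proper subsets.
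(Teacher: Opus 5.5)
Your proof is correct and is the standard argument: conjugating $A(\Gamma)$ by the signature matrix $D_U$, with $U=\{v_i : x_i<0\}$, realizes switching at $U$ and sends $x$ to the non-negative eigenvector $(|x_1|,\dots,|x_n|)^{T}$ for the same $\lambda_1$. The paper gives no proof of its own and only cites Lemma~2.5 of Sun--Liu--Lan, whose argument is presumably this one; your treatment of the degenerate cases $U=\emptyset$ and $U=V(\Gamma)$ is fine, since switching equivalence is reflexive.
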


\begin{lemma}\emph{(See \cite[Proposition~3.2]{Zaslavsky-DAM-1982})}\label{Zaslavsky-DAM-1982}
	Two signed graphs on the same underlying graph are switching equivalent if and only if they have the same list of balanced cycles.
\end{lemma}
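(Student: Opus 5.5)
We will prove the two directions separately. The forward direction uses only the fact that switching preserves the sign of every cycle. The converse reduces both signed graphs to a normal form in which a fixed spanning forest carries only positive edges.

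\emph{Switching preserves cycle signs.} Let $\Gamma'$ be obtained from $\Gamma$ by switching at $U$, and let $C$ be any cycle. Traversing $C$, each crossing between $U$ and its complement must be matched by a crossing back. Hence $C$ contains an even number of edges between $U$ and $V(\Gamma)\setminus U$. Switching reverses the signs of exactly these edges, so it changes the number of negative edges on $C$ by an even amount, and the sign of $C$ is unchanged. By induction on the number of switchings, switching equivalent signed graphs have the same set of positive (balanced) cycles, which is the forward direction.

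\emph{Normal form.} Let $G$ be the common underlying graph and fix a spanning forest $T$ of $G$, with one root in each component. For a vertex $v$, let $\theta(v)\in\{+1,-1\}$ be the product of the signs in $\Gamma$ along the unique path in $T$ from the root of its component to $v$. Put $U=\{v:\theta(v)=-1\}$. If $U$ is empty or equals $V(G)$, then every edge of $T$ is already positive and no switching is needed. Otherwise, switch $\Gamma$ at $U$. For a tree edge $uv$, say with $v$ the child of $u$, we have $\theta(v)=\theta(u)\sigma(uv)$. Thus $uv$ crosses the cut exactly when $\sigma(uv)=-1$, so after switching every edge of $T$ is positive. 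Call the result $\widehat\Gamma$, and construct $\widehat{\Gamma'}$ from $\Gamma'$ in the same way, using the same $T$.

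\emph{Comparing the normal forms.} Every edge $e\notin T$ closes a unique fundamental cycle $C_e$ in $T+e$. In $\widehat\Gamma$ all other edges of $C_e$ are positive, so the sign of $e$ equals the sign of $C_e$ in $\widehat\Gamma$. By the first step, this is the sign of $C_e$ in $\Gamma$. Now assume $\Gamma$ and $\Gamma'$ have the same list of balanced cycles. Then $C_e$ has the same sign in $\Gamma$ and $\Gamma'$, so $e$ has the same sign in $\widehat\Gamma$ and $\widehat{\Gamma'}$. Tree edges are positive in both, hence $\widehat\Gamma=\widehat{\Gamma'}$. Since switching at $U$ is an involution, the relation $\sim$ is symmetric and transitive, and we get $\Gamma\sim\widehat\Gamma=\widehat{\Gamma'}\sim\Gamma'$.

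The only step requiring care is the construction of the switching set $U$ from the potential $\theta$ and the check that it makes every edge of $T$ positive. The argument is identical in each component of $G$, so disconnected graphs need no separate treatment.
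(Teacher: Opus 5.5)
Your proof is correct and complete. The cut-parity argument gives the forward direction. The potential $\theta$ on the spanning forest $T$ gives a switching after which every tree edge is positive, so each non-tree edge $e$ carries the sign of its fundamental cycle $C_e$, and the two normal forms coincide. The case $U=V(G)$ cannot actually arise, since every root has $\theta=+1$, but setting it aside does no harm.

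The paper gives no proof of its own for this lemma; it only cites Zaslavsky. A common textbook argument is packaged differently. The product signature $\sigma\sigma'$ is balanced exactly when $\Gamma$ and $\Gamma'$ have the same balanced cycles. Harary's theorem then says the negative edges of $\sigma\sigma'$ form a cut $[U,V(G)\setminus U]$, and switching $\Gamma$ at $U$ gives $\Gamma'$ directly. Your tree-potential construction is in effect a proof of that Harary step. So your version is self-contained, and it also gives a canonical representative of each switching class relative to $T$.

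Note that the paper only uses the easy direction. In the proof of Theorem~\ref{tK4} it needs only that the switched graph $\Gamma'$ stays unbalanced and $t\mathcal{K}_{4}^{-}$-free, and that is exactly your first paragraph.
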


\begin{lemma}\emph{(See \cite[Lemma~2.1]{Hou-Tang-Wang-AMC-2019})}
	\label{Hou-Tang-Wang-AMC-2019}
	Let $\Gamma_{1}=(G,\sigma_{1})$ and $\Gamma_{2}=(G,\sigma_{2})$ be two signed graphs on the same underlying graph $G$. The following are equivalent:
	
	$\mathrm{(1)}$~$\Gamma_{1}$ and $\Gamma_{2}$ are switching equivalent.
	
	$\mathrm{(2)}$~$A(\Gamma_{1})$ and $A(\Gamma_{2})$ are similar.
\end{lemma}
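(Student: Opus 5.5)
The plan is to prove the two implications separately, encoding a switching as conjugation by a signature matrix. For a nonempty proper subset $U\subset V(G)$, let $D_U$ be the diagonal matrix with $(D_U)_{ii}=-1$ if $v_i\in U$ and $(D_U)_{ii}=1$ otherwise. Then $D_U=D_U^{-1}$. Moreover, $(D_UA(\Gamma)D_U)_{ij}=(D_U)_{ii}\,a^{\sigma}_{ij}\,(D_U)_{jj}$ changes sign exactly when one of $v_i,v_j$ lies in $U$ and the other does not. So $D_UA(\Gamma)D_U$ is the adjacency matrix of the signed graph obtained from $\Gamma$ by switching at $U$.

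For $\mathrm{(1)}\Rightarrow\mathrm{(2)}$: if $\Gamma_2$ arises from $\Gamma_1$ by switchings at $U_1,\dots,U_k$, then $A(\Gamma_2)=D^{-1}A(\Gamma_1)D$ with $D=D_{U_1}\cdots D_{U_k}$. Hence the two adjacency matrices are similar, and in fact similar via a diagonal $\pm1$ matrix.

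For $\mathrm{(2)}\Rightarrow\mathrm{(1)}$ I would read ``similar'' in the sense intended in \cite{Hou-Tang-Wang-AMC-2019}: similar through a signature matrix, or more generally through an invertible diagonal matrix $P=\mathrm{diag}(p_1,\dots,p_n)$. Suppose $A(\Gamma_2)=P^{-1}A(\Gamma_1)P$. Comparing entries gives $\sigma_2(v_iv_j)=p_i^{-1}p_j\,\sigma_1(v_iv_j)$ for every edge $v_iv_j$ of $G$. Since both signs are $\pm1$, $p_j/p_i=\pm1$ on every edge, so in particular $\operatorname{sign}(p_i)$ is well defined at every vertex. Set $U=\{v_i: p_i<0\}$. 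Then $\sigma_2(v_iv_j)=\operatorname{sign}(p_i)\operatorname{sign}(p_j)\,\sigma_1(v_iv_j)$. This says exactly that $\Gamma_2$ is obtained from $\Gamma_1$ by switching at $U$; nothing needs to be done if $U$ is empty or all of $V$, since then $\Gamma_2=\Gamma_1$. A cross-check is Lemma~\ref{Zaslavsky-DAM-1982}: the sign of each cycle is the product of $\operatorname{sign}(p_i)^2=1$ factors times its $\sigma_1$-sign, so both signings have the same balanced cycles.

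The main obstacle is the converse for an \emph{arbitrary} similarity. Cospectrality alone only forces equality of the signed closed-walk counts $\mathrm{tr}\,A^k$. This does not obviously separate the individual cycles appearing in the criterion of Lemma~\ref{Zaslavsky-DAM-1982}, and I expect it fails for general graphs. I would therefore take the diagonal (signature) similarity as the operative hypothesis, exactly as above.

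That reading suffices for the paper's later use of this lemma. There it is applied only in the direction $\mathrm{(1)}\Rightarrow\mathrm{(2)}$: switching preserves the spectrum, so one may replace $\Gamma$ by the switching-equivalent $\Gamma'$ of Lemma~\ref{Sun-Liu-Lan-LAA-2022}, which has a non-negative Perron-type eigenvector.
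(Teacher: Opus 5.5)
The paper gives no proof of this lemma. It only cites \cite{Hou-Tang-Wang-AMC-2019}, and it uses only the direction (1)$\Rightarrow$(2), to transfer the maximum-index property from $\Gamma$ to the switched graph $\Gamma^{\prime}$. Your argument for that direction is the standard one and is complete: you write a switching at $U$ as $A\mapsto D_UAD_U$ with $D_U$ an involutory signature matrix. Your treatment of (2)$\Rightarrow$(1) under a diagonal similarity $P^{-1}A(\Gamma_1)P=A(\Gamma_2)$ is also correct. On every edge, $p_j/p_i=\sigma_1(v_iv_j)\sigma_2(v_iv_j)\in\{\pm1\}$, which forces $p_j/p_i=\operatorname{sign}(p_i)\operatorname{sign}(p_j)$, so switching at $U=\{v_i:p_i<0\}$ works.

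Your hesitation about arbitrary similarity should be stated as a fact, not an expectation. For real symmetric matrices, similarity is the same as cospectrality, and under that reading (2)$\Rightarrow$(1) is false, even for connected $G$. A counterexample:
\begin{itemize}
\item Let $G$ be the bowtie, i.e.\ triangles $abc$ and $cde$ sharing the vertex $c$.
\item Let $\Gamma_1$ have $ab$ as its only negative edge, and $\Gamma_2$ have $de$ as its only negative edge.
\item The automorphism of $G$ exchanging $a\leftrightarrow d$ and $b\leftrightarrow e$ carries $\Gamma_1$ onto $\Gamma_2$. Hence $A(\Gamma_2)=P^{T}A(\Gamma_1)P$ for a suitable permutation matrix $P$, so the two adjacency matrices are similar.
\item However, the triangle $abc$ is negative in $\Gamma_1$ and positive in $\Gamma_2$. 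Every cycle crosses the cut $(U,V\setminus U)$ an even number of times, so no switching changes the sign of a cycle (equivalently, apply Lemma~\ref{Zaslavsky-DAM-1982}). Thus $\Gamma_1$ and $\Gamma_2$ are not switching equivalent.
\end{itemize}
So the lemma holds only when ``similar'' means similar via a signature matrix, or more generally via an invertible diagonal matrix, as you show. Your restriction is therefore the correct reading, not a weakening, and nothing the paper later needs is affected.
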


Let $M$ be a real symmetric matrix of order $n$, and let $[n] = \{1, 2, \dots, n\}$. Given a partition $\Pi: [n] = X_1 \cup X_2 \cup \dots \cup X_k$, the matrix $M$ can be partitioned  as
\[
M =
\begin{pmatrix}
	M_{1,1} & M_{1,2} & \cdots & M_{1,k} \\
	M_{2,1} & M_{2,2} & \cdots & M_{2,k} \\
	\vdots & \vdots & \ddots & \vdots \\
	M_{k,1} & M_{k,2} & \cdots & M_{k,k}
\end{pmatrix},
\]
where $M_{ij}$ is the submatrix of $M$ with rows indexed by $X_i$ and columns indexed by $X_j$. The \textit{characteristic matrix} $\chi_{\Pi}$ of $\Pi$ is the \( n \times k\) matrix whose columns are the characteristic vectors of $X_1, \dots, X_k$. If all row sums of $M_{i,j}$ are equal to the same constant, denoted by $b_{i,j}$, for all $i,j \in \{1, 2, \dots, k\}$, then $\Pi$ is called an \emph{equitable partition} of $M$, and the matrix $Q= (b_{i,j})_{i,j=1}^k$ is called an \emph{equitable quotient matrix} of $M$.

\begin{lemma} \emph{(See \cite[p. 30]{Brouwer-Haemers-Book-2011})}\label{Brouwer-Haemers-Book-2011}
	The matrix $M$ has the following two kinds of eigenvectors and eigenvalues:
	
	$\mathrm{(i)}$~Eigenvectors lying in the column space of $\chi_{\Pi}$; their corresponding eigenvalues coincide with those of $Q$.
	
	$\mathrm{(ii)}$~Eigenvectors orthogonal to the columns of $\chi_{\Pi}$; the corresponding eigenvalues of $M$ remain unchanged if some scalar multiple of the all-one
	block is added to each block $M_{i,j}$ for all $i,j \in \{1,\ldots,m\}$.
\end{lemma}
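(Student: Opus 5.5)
The statement just made is the classical Lemma \ref{Brouwer-Haemers-Book-2011} on equitable partitions. I would prove it directly from one matrix identity relating $M$, $\chi_{\Pi}$ and $Q$. Write $P=\chi_{\Pi}$, the $n\times k$ characteristic matrix. Its columns are nonzero vectors with disjoint supports, so $P$ has full column rank $k$.

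The first step is to rephrase equitability. The $(v,j)$ entry of $MP$ is the row sum of the block $M_{i,j}$ at the row $v\in X_i$. By hypothesis this equals $b_{i,j}$, which is exactly the $(v,j)$ entry of $PQ$. Hence
\[
MP=PQ .
\]

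For part (i), take $Qx=\theta x$ with $x\neq 0$. Then $M(Px)=PQx=\theta Px$, and $Px\neq 0$ because $P$ has full column rank. So every eigenvalue of $Q$ is an eigenvalue of $M$, with an eigenvector in the column space of $P$. Conversely, the identity shows that the column space $W$ of $P$ is $M$-invariant. In the basis formed by the columns of $P$, the restriction $M|_W$ is represented by $Q$. Thus any eigenvector of $M$ lying in $W$ has the form $Px$ with $Qx=\theta x$, and its eigenvalue is an eigenvalue of $Q$. This also gives the multiplicity statement: the spectrum of $M|_W$ is precisely the spectrum of $Q$.

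For part (ii), symmetry of $M$ and $M$-invariance of $W$ imply that $W^{\perp}$ is also $M$-invariant. So $M$ has an orthonormal eigenbasis of $W^{\perp}$, and together with (i) this accounts for all $k+(n-k)=n$ eigenvalues. Adding $c_{ij}$ times the all-one block to each $M_{i,j}$ changes $M$ into $M'=M+PCP^{\top}$, where $C=(c_{ij})$. For $y\in W^{\perp}$ we have $P^{\top}y=0$, so $M'y=My$. Therefore every eigenvector of $M$ orthogonal to the columns of $P$ remains an eigenvector of $M'$ with the same eigenvalue.

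I expect the only delicate point to be bookkeeping: checking that the eigenvalues from (i) and (ii) together exhaust the spectrum of $M$ with the correct multiplicities, via the splitting $\mathbb{R}^n=W\oplus W^{\perp}$. The rest is immediate from $MP=PQ$.
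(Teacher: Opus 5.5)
Your proof is correct and is essentially the standard argument behind the cited result in Brouwer--Haemers; the paper itself gives no proof and only cites the book. As you do there, the identity $MP=PQ$ makes the column space $W$ of $\chi_{\Pi}$ invariant with $M|_W$ represented by $Q$, symmetry of $M$ makes $W^{\perp}$ invariant, and $PCP^{\top}y=0$ for $y\in W^{\perp}$ gives (ii), even when the modified matrix is not symmetric.
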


See Figure \ref{figure1}. Let $K_{r}$ be a complete graph with vertex set $\left\{w_1,w_2,\dots,w_r\right\}$, $K_{n-2-r-k}$ a complete graph with vertex set $\left\{v_1,v_2,\dots,v_{n-2-r-k}\right\}$, and $k\cdot K_1$ an empty graph on $k$ vertices. Let $\Sigma_{k,n}$ be the signed graph obtained from $K_{r}$, $K_{n-2-r-k}$ and $k\cdot K_1$ by adding two new vertices $u_1$, $u_2$, joining $u_1$ to $u_2$, joining $u_1$ to all vertices of $K_{r}$, $K_{n-2-r-k}$ and $k\cdot K_1$, joining $u_2$ to $K_{n-2-r-k}$ and $k\cdot K_1$, and finally adding all edges between $K_{r}$ and $K_{n-2-r-k}$ and between $k\cdot K_1$ and $K_{n-2-r-k}$, where $u_1 u_2$ is the unique negative edge.

\begin{lemma}\label{sigma_{k,n}}
	Let $\Sigma_{k,n}$ be the signed graph of sufficiently large order $n$ as shown in Figure \ref{figure1}. If $k\ge 2$, $r\ge 2$ and $\lambda_1(\Sigma_{k,n})>n-2$, then
	\[
	\lambda_1(\Sigma _{k,n})<\lambda_1(\Sigma_{k-1,n}).
	\]
\end{lemma}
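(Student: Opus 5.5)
Since $u_1u_2$ is the only negative edge, I will first fix notation and read off the structure of $\Sigma_{k,n}$. The vertex classes are $\{u_1\}$, $\{u_2\}$, $V=\{v_1,\dots,v_{n-2-r-k}\}$, $W=\{w_1,\dots,w_r\}$ and $Q=\{q_1,\dots,q_k\}$. With respect to the partition $\{u_1\},\{u_2\},V,W,Q$, the adjacency matrix is equitable. Set $m=n-2-r-k$. The quotient matrix, with rows and columns ordered $u_1,u_2,V,W,Q$, is
\[
B_k=\begin{pmatrix}
0&-1&m&r&k\\
-1&0&m&0&k\\
1&1&m-1&r&k\\
1&0&m&r-1&0\\
1&1&m&0&0
\end{pmatrix}.
\]
By Lemma \ref{Brouwer-Haemers-Book-2011}, every eigenvalue of $A(\Sigma_{k,n})$ is either an eigenvalue of $B_k$ or comes from an eigenvector orthogonal to the characteristic vectors. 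In the second case we may subtract $J$ from the diagonal blocks of $V$ and $W$ (and nothing from $Q$), so those eigenvalues lie in $\{-1,0\}$. Since $\lambda_1(\Sigma_{k,n})>n-2>0$ by hypothesis, $\lambda_1(\Sigma_{k,n})$ is the largest real root of $g_k(\lambda)=\det(\lambda I-B_k)$. The same reasoning applies to $\Sigma_{k-1,n}$, provided we know $\lambda_1(\Sigma_{k-1,n})>n-2$ as well; that will follow once we show the inequality.

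The main computation is to expand $g_k(\lambda)$ as an explicit quintic in $\lambda$ with coefficients polynomial in $n,r,k$. I would do this by cofactor expansion after simplifying rows, for instance subtracting row $u_2$ from row $u_1$ and row $Q$ from row $V$, which creates many zeros. I would then form $h(\lambda)=g_{k-1}(\lambda)-g_k(\lambda)$. Since $B_{k-1}$ differs from $B_k$ only by moving one vertex from $Q$ to $V$, I expect $h$ to have degree at most $3$ in $\lambda$. Its leading behaviour should be governed by the fact that a $Q$-vertex misses $W$, and the other vertices of $Q$, whereas a $V$-vertex sees them.

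The goal is to show $g_{k-1}(\lambda_1(\Sigma_{k,n}))<0$. Since $g_{k-1}$ is monic of degree $5$ and positive for large $\lambda$, this forces $\lambda_1(\Sigma_{k-1,n})>\lambda_1(\Sigma_{k,n})$. As $g_k(\lambda_1(\Sigma_{k,n}))=0$, it suffices to prove $h(\lambda)<0$ for $\lambda=\lambda_1(\Sigma_{k,n})$.

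To pin down the range of $\lambda$, I use two bounds. First, $\lambda>n-2$ by hypothesis. Second, $\lambda<n-1$, because $\Sigma_{k,n}$ is unbalanced and not switching equivalent to the complete graph with one negative edge, so Theorem \ref{maximum index} gives $\lambda<\lambda_1(K_n \text{ with one negative edge})<n-1$. Substituting $\lambda=n-2+\theta$ with $\theta\in(0,1)$ into $h$ then gives an expression whose dominant term in $n$ should be negative, something like $-r(n)^2+O(n)$ or $-k n^2$, times positive factors. I would check the sign by collecting the leading-order terms in $n$. Here I use that $n$ is sufficiently large while $r$ is fixed and $k\ge 2$, and that $k$ is bounded since $m\ge 0$. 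If $k$ can be comparable to $n$, I would bound the terms uniformly using $k\le n-2-r$.

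The main obstacle is this sign analysis. The determinant expansion is routine but messy, and the lower-order terms involving $k$ and $r$ must be controlled uniformly. If the direct estimate is awkward, my fallback is an eigenvector argument. Take a unit eigenvector $x$ of $\Sigma_{k,n}$ that is constant on classes. Move a vertex $q\in Q$ into $V$ by adding edges from $q$ to $W$ and to the rest of $Q$, and then use the Rayleigh quotient. This increases the quotient by $2x_q\bigl(\sum_{W}x_w+\sum_{Q\setminus q}x_{q'}\bigr)>0$, provided the Perron-like entries are positive. Establishing that positivity from the eigen-equations, using $\lambda>n-2$, is then the key point, and it directly gives strict inequality.
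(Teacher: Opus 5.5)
Your main plan (equitable quotient, evaluate $g_{k-1}-g_k$ at $\lambda=\lambda_1(\Sigma_{k,n})$, then a sign analysis using $\lambda>n-2$ and $n$ large) is the paper's route, but as set up it has two real problems.

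\textbf{The wrong graph.} Your $B_k$ makes $u_2$ adjacent to $V$ and $Q$ but not to $W$. This follows the prose definition before the lemma, which contains a slip. Figure~\ref{figure1}, the adjacency matrix in the paper's proof, and the application in Claim~9 all make $u_2$ adjacent to $W\cup Q$ and \emph{not} to $V$. (In Claim~9, $u_2=v_n$, $W\cup Q=N(v_1)\cap N(v_n)$ and $V=N(v_1)\setminus(N(v_n)\cup\{v_n\})$.) In your ordering $u_1,u_2,V,W,Q$ the correct quotient is
\[
\begin{pmatrix}
0&-1&m&r&k\\
-1&0&0&r&k\\
1&0&m-1&r&k\\
1&1&m&r-1&0\\
1&1&m&0&0
\end{pmatrix}.
\]
This matters beyond bookkeeping. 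In your graph, $\Sigma_{k-1,n}$ is just $\Sigma_{k,n}$ plus positive edges, so the inequality is essentially Perron-type monotonicity. In the actual graph, moving $q$ from $Q$ to $V$ also \emph{deletes} the edge $qu_2$. Your fallback increment therefore becomes $2x_q\bigl(\sum_{W}x_w+\sum_{Q\setminus\{q\}}x_{q'}-x_{u_2}\bigr)$, and you would also have to control $x_{u_2}$.

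\textbf{The sign analysis.} You correctly call this the crux, but leave it open, and your heuristics for it are off. The difference $h=g_{k-1}-g_k$ does not have degree at most $3$. The trace of the quotient is $n-4-k$ (the $V$-diagonal entry is $m-1$, the $Q$-diagonal entry is $0$), so it rises by one as $k$ drops. Hence, for the Figure~\ref{figure1} graph,
\[
h=-\lambda^4+(n-2k-r-1)\lambda^3+\cdots .
\]
The two leading terms are each of order $n^4$ and nearly cancel. The paper handles this as follows:
\begin{itemize}
\item it uses $\lambda>n-2$ to get $-\lambda^4+(n-2k-r-1)\lambda^3<-(2k+r-1)\lambda^3$;
\item it bounds the $\lambda^2$-coefficient using $n\ge k+r+2$;
\item it thereby gets a contribution below $-(3n+1)\lambda^2$, which dominates the linear and constant terms.
\end{itemize}
Your degree-$3$ expectation misses exactly this cancellation. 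You also cannot treat $r$ as fixed: in the application $r$ can be as large as $n-4$, so the estimate must be uniform in both $r$ and $k$.

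\textbf{A shorter route.} For the Figure~\ref{figure1} graph, neither you nor the paper needs any of this, because the hypothesis is never satisfied. The underlying graph $G$ has no isolated vertex and exactly $(n-2-r-k)+rk+\binom{k}{2}$ non-edges. Hong's bound gives $\lambda_1(G)\le\sqrt{2e-n+1}$ with $e=|E(G)|$. Since $|A(\Sigma_{k,n})|=A(G)$ entrywise, $\lambda_1(\Sigma_{k,n})\le\lambda_1(G)$. Together these give
\[
\lambda_1(\Sigma_{k,n})^2\le (n-2)^2+1+2r+3k-2rk-k^2<(n-2)^2\qquad(k,r\ge 2).
\]
So the lemma holds vacuously, and in Claim~9 this alone forces $k=1$.
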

\begin{proof}
	By using the vertex partition $V_1=\left \{u_2\right \}$, $V_2=\left \{u_1\right \}$, $V_3=\left \{w_1,w_2,\dots,w_r\right \}$, $V_4=\left \{q_1,q_2,\dots,q_k\right \}$ and $V_5=\left \{v_1,v_2,\dots,v_{n-2-r-k}\right \}$, we present $A(\Sigma_{k,n})$ and its corresponding quotient matrix $Q(\Sigma_{k,n})$ as follows
	\[
	A(\Sigma_{k,n})=
	\begin{pmatrix}
		0 & -1 & \mathbf{j}^T & \mathbf{j}^T & \mathbf{0} \\
		-1 & 0 & \mathbf{j}^T & \mathbf{j}^T & \mathbf{j}^T \\
		\mathbf{j} & \mathbf{j} & J-I & \mathbf{0} & J \\
		\mathbf{j} & \mathbf{j} & \mathbf{0} & \mathbf{0} & J \\
		\mathbf{0} & \mathbf{j} & J & J	 & J-I
	\end{pmatrix}
	\]
	and
	\[
	Q(\Sigma_{k,n})=
	\begin{pmatrix}
		0 & -1 & r & k & 0 \\
		-1 & 0 & r & k & n-2-r-k \\
		1 & 1 & r-1 & 0 & n-2-r-k \\
		1 & 1 & 0 & 0 & n-2-r-k \\
		0 & 1 & r & k & n-3-r-k
	\end{pmatrix},
	\]
	where $\mathbf{0}$ and $\mathbf{j}$ represent the zero vector and the all-ones vector of appropriate dimensions, respectively, and $I$ and $J$ denote the identity matrix and all-ones matrix of appropriate orders, respectively.
	We add some scalar multiple of $\mathbf{j}$ or $J$ to the blocks equal to $-1$, $\mathbf{j}$, $J$ and $J-I$ in $A(\Sigma_{k,n})$. Then we have
	\[
	A^{\prime}(\Sigma_{k,n})=
	\begin{pmatrix}
		0 & 0 & \mathbf{0} & \mathbf{0} & \mathbf{0} \\
		0 & 0 & \mathbf{0} & \mathbf{0} & \mathbf{0} \\
		\mathbf{0} & \mathbf{0} & -I & \mathbf{0} & \mathbf{0} \\
		\mathbf{0} & \mathbf{0} & \mathbf{0} & \mathbf{0} & \mathbf{0} \\
		\mathbf{0} & \mathbf{0} & \mathbf{0} & \mathbf{0}	 & -I
	\end{pmatrix}.
	\]
	The eigenvalues of $A^{\prime}(\Sigma_{k,n})$ are $-1$ (with multiplicity $n-k-2$) and $0$ (with multiplicity $k+2$). By Lemma \ref{Brouwer-Haemers-Book-2011} and $\lambda_1(\Sigma_{k,n})>0$, we obtain that $\lambda_1(\Sigma _{k,n})=\lambda_1(Q(\Sigma_{k,n}))$, that is, the largest eigenvalue of $Q(\Sigma_{k,n})$.
	By direct calculation, the characteristic polynomial of $Q(\Sigma_{k,n})$ is
	\[
	\begin{aligned}
		h_{k,n}(x) &= x^5 + (k - n + 4)x^4 \\
		&\quad + (k^2 - kn + kr + 2k - 2n - r + 4)x^3 \\
		&\quad + (-k^2r + k^2 + knr - kn - kr^2 + nr - r^2 - r - 2)x^2 \\
		&\quad + (-2k^2 + 2kn - 3kr - 3k + nr + n - r^2 - 3)x \\
		&\quad + (2k^2r - 2k^2 - 2knr + 2kn + 2kr^2 - 2k).
	\end{aligned}
	\]
	Hence, we have
	\[
	\begin{aligned}
		h_{k-1,n}(x)-h_{k,n}(x) &= -x^4 + (-2k + n - r - 1)x^3 \\
		&\quad + (r^2 + 2kr - 2k - nr + n - r + 1)x^2 \\
		&\quad + (4k - 2n + 3r + 1)x \\
		&\quad - 2r^2 - 4kr + 4k + 2nr - 2n + 2r.
	\end{aligned}
	\]
	Set
	$$
	m(x)=-x^4+ (-2k + n - r - 1)x^3+ (r^2 + 2kr - 2k - nr + n - r + 1)x^2.
	$$
	Since $k\ge 2$, $r\ge 2$, $n\ge k+r+2$ and $\lambda_1(\Sigma_{k,n})>n-2$, we obtain that
	\begin{align*}
		-\lambda_1(\Sigma _{k,n})^4+(-2k + n - r - 1)\lambda_1(\Sigma _{k,n})^3&=\lambda_1(\Sigma _{k,n})^3(-\lambda_1(\Sigma _{k,n})-2k+n-r-1)\\
		&<-(2k+r-1)\lambda_1(\Sigma _{k,n})^3,
	\end{align*}
	and
	\begin{align*}
		(r^2 + 2kr - 2k - nr + n - r + 1)\lambda_1(\Sigma _{k,n})^2
		&=\left((r-1)(2k-n)+r^2-r+1\right)\lambda_1(\Sigma _{k,n})^2\\
		&\le \left((r-1)(k-r-2)+r^2-r+1\right)\lambda_1(\Sigma _{k,n})^2\\
		&=(rk-2r-k+3)\lambda_1(\Sigma _{k,n})^2.
	\end{align*}
	Hence,
	\begin{align*}
		m(\lambda_1(\Sigma _{k,n}))&<-(2k+r-1)\lambda_1(\Sigma _{k,n})^3+\left( rk-2r-k+3 \right)\lambda_1(\Sigma _{k,n})^2\\
		&< \lambda_1(\Sigma _{k,n})^2\left(-(n-2)(2k+r-1)+(rk-2r-k+3)\right)\\
		&<\left(k(-2n+r+3)-nr+n+1\right)\lambda_1(\Sigma _{k,n})^2\\
		&\le \left(k(-n-k+1)-nr+n+1\right)\lambda_1(\Sigma _{k,n})^2\\
		&\le (-3n-1)\lambda_1(\Sigma _{k,n})^2.
	\end{align*}
	Since the linear term and constant term of $h_{k-1,n}(x)-h_{k,n}(x)$ are at most quadratic in $n$, we have $h_{k-1,n}(\lambda_1(\Sigma _{k,n}))-h_{k,n}(\lambda_1(\Sigma _{k,n}))<0$ for sufficiently large $n$. Hence, $\lambda_1(\Sigma_{k-1,n})> \lambda_1(\Sigma _{k,n})$.\qed
\end{proof}

\section{Proof~of~Theorem~\ref{tK4}}

\begin{Tproof}\textbf{~of~Theorem~\ref{tK4}.}~When $t\ge \begin{pmatrix}
		n-2\\2
	\end{pmatrix}+1$, it is straightforward to verify that $\Gamma_{n-2,n}$ is unbalanced and $t\mathcal{K}_{4}^{-}$-free. By Theorem \ref{maximum index}, $\lambda_1(\Gamma)\le \lambda_1(\Gamma_{n-2,n})$ with equality holding if and only if  $\Gamma \sim \Gamma_{n-2,n}$.
	
	Next, we consider the case where $2\le t \le \begin{pmatrix}
		n-2\\2
	\end{pmatrix}$. Let $\Gamma=(G,\sigma)$ be a signed graph having the maximum index over all $t\mathcal{K}_{4}^{-}$-free unbalanced signed graph of order $n$. By Lemma \ref{Sun-Liu-Lan-LAA-2022}, let $\Gamma^{\prime}=(G,\sigma^{\prime})$ be the signed graph switching equivalent to $\Gamma$ for which $\lambda_{1}(\Gamma^{\prime})$ admits a non-negative eigenvector. Let $V(\Gamma^{\prime}) = \left \{v_1,v_2,\dots,v_n \right \}$ and $\mathbf{x}=\left (x_1,x_2,\dots,x_n \right )^T$ be the non-negative unit eigenvector of $A(\Gamma^{\prime})$ corresponding to $\lambda_1(\Gamma^{\prime})$. By Lemmas \ref{Zaslavsky-DAM-1982} and \ref{Hou-Tang-Wang-AMC-2019}, $\Gamma^{\prime}=(G,\sigma^{\prime})$ is also $t\mathcal{K}_{4}^{-}$-free unbalanced signed graph with the maximum index. Note that $\Gamma_{2,n}$ is $t\mathcal{K}_{4}^{-}$-free unbalanced signed graph. By Lemma \ref{Xiong-Hou-AC-2024-2},
	$$
	\lambda_1(\Gamma^{\prime})\ge \lambda_1(\Gamma_{2,n})> \lambda_1(\Gamma_{1,n})=n-2 > \frac{1}{2}\left(\sqrt{n^{2}-8}+n-4\right).
	$$
	By Lemma \ref{Wang-Hou-Li-LAA-2024}, $\Gamma^{\prime}$ contains a subgraph $C_3^-$. Assume that $V(C_3^-)=\left \{v_1,v_2,v_n  \right \}$.
	\begin{claim}\label{one zero coordinate}
		The eigenvector $\mathbf{x}$ has at most one zero coordinate.
	\end{claim}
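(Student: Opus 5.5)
Suppose, for contradiction, that $x_i = x_j = 0$ for two distinct indices $i,j$. The plan is to modify $\Gamma^{\prime}$ near these two vertices so that it stays $t\mathcal{K}_{4}^{-}$-free and unbalanced, but the Rayleigh quotient does not drop. Equality in the Rayleigh quotient then forces $\mathbf{x}$ to be an eigenvector of the modified graph, and the eigen-equation at $v_i$ or $v_j$ gives the contradiction.

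First, the structure of the zero set. For any vertex $v_i$ with $x_i=0$, the eigen-equation gives
\[
0=\lambda_1(\Gamma^{\prime})x_i=\sum_{v_k\in N(v_i)}\sigma^{\prime}(v_iv_k)x_k .
\]
Hence the positive and negative neighbours of $v_i$ carry the same total weight. Since $\lambda_1(\Gamma^{\prime})>n-2$, we also know $\mathbf{x}$ is far from concentrated on a few vertices.

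Second, the modification. Given two zero coordinates $x_i=x_j=0$, I would delete every edge incident to $v_i$ or $v_j$. Then I would reattach $v_i$ and $v_j$ only by positive edges, chosen so that no new unbalanced $K_4$ can appear. Concretely, I would make $v_i$ and $v_j$ pendant (or adjacent only to each other), or join them positively to a single vertex outside the fixed negative triangle $v_1v_2v_n$. A pendant vertex lies in no $K_4$, so the new graph $\Gamma^{\ast}$ is still $t\mathcal{K}_{4}^{-}$-free. It is still unbalanced as long as the triangle $C_3^-$ on $\{v_1,v_2,v_n\}$ survives. If one of $v_i,v_j$ lies in $\{v_1,v_2,v_n\}$, I would instead keep that triangle edge and choose the other endpoint carefully.

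Third, compare the Rayleigh quotients. Every removed or added edge has at least one endpoint with coordinate $0$, so
\[
\mathbf{x}^TA(\Gamma^{\ast})\mathbf{x}=\mathbf{x}^TA(\Gamma^{\prime})\mathbf{x}=\lambda_1(\Gamma^{\prime}).
\]
This gives $\lambda_1(\Gamma^{\ast})\ge\lambda_1(\Gamma^{\prime})$. By the maximality of $\Gamma^{\prime}$, equality holds, so $\mathbf{x}$ is also an eigenvector of $A(\Gamma^{\ast})$ for $\lambda_1(\Gamma^{\ast})$. Reading the eigen-equation at a vertex that is still joined positively to some vertex with positive coordinate now gives $0=\lambda_1 x_i>0$, a contradiction.

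The main obstacle is the combinatorial step: making sure the modification keeps $\Gamma^{\ast}$ unbalanced while forcing a positive-weight neighbour of a zero vertex. The cheapest route I would try first is a degree argument. Isolating two vertices loses at most $O(n)$ edges, but $\lambda_1(\Gamma^{\prime})>n-2$ forces $\Gamma^{\prime}$ to be nearly complete, so the zero vertices have large degree. Then I would use the zero-sum condition above, together with $t\mathcal{K}_{4}^{-}$-freeness, to show that a zero vertex needs many negative edges balancing many positive ones, which should produce many unbalanced $K_4$'s. The bookkeeping needed to make this rigorous is where I expect the real work to lie.
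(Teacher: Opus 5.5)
Your rewiring mechanism is valid where it applies. Suppose a zero vertex $v_i$ lies off the negative triangle $v_1v_2v_n$. Delete all edges at $v_i$ and attach it by one positive edge to some $v_k$ with $x_k>0$. This preserves the Rayleigh quotient, unbalancedness and $t\mathcal{K}_4^-$-freeness, and maximality together with the eigen-equation at $v_i$ gives $0=x_k>0$. Your alternative of making $v_i,v_j$ adjacent only to each other gives no contradiction, since the equation at $v_i$ then reads $0=x_j$.

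\textbf{Gap.} The proposal stops exactly where the argument needs work. If both zero vertices lie on the triangle, isolating either one may destroy the only negative cycle you control. The phrase ``choose the other endpoint carefully'' and the speculative degree/zero-sum counting in your last paragraph do not constitute a proof. As written, the claim is not established.

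\textbf{The missing idea.} No modification and no maximality are needed; the bound $\lambda_1(\Gamma^{\prime})>n-2$, which you already quote, settles it at once. If $x_i=x_j=0$, let $\mathbf{y}$ be the unit restriction of $\mathbf{x}$ to the other $n-2$ vertices. Then $\lambda_1(\Gamma^{\prime})=\mathbf{x}^TA(\Gamma^{\prime})\mathbf{x}=\mathbf{y}^TA(\Gamma^{\prime}-v_i-v_j)\mathbf{y}\le\lambda_1(\Gamma^{\prime}-v_i-v_j)$. Any signed graph on $n-2$ vertices has index at most $n-3$, because $\mathbf{y}^TA\mathbf{y}\le|\mathbf{y}|^TA(K_{n-2})|\mathbf{y}|$. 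This contradicts $\lambda_1(\Gamma^{\prime})>n-2$, and it is exactly the paper's two-line proof. Your remark that $\mathbf{x}$ is ``far from concentrated on a few vertices'' is this fact; you only needed to make it quantitative.

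\textbf{Patching your route.} If you prefer to keep your approach, the triangle case can be closed. Call the triangle $v_iv_jv_\ell$. Keep its three edges with their signs and delete all other edges at $v_i$ and $v_j$. Add a positive edge $v_iv_k$ with $x_k>0$ and $k\neq\ell$; such $k$ exists, since otherwise $\mathbf{x}$ would be a coordinate vector with Rayleigh quotient $0$. Then:
\begin{itemize}
\item $v_j$ has degree $2$, and $v_i$ lies in no $K_4$ because $v_jv_k$ is absent, so the graph stays $t\mathcal{K}_4^-$-free;
\item the triangle keeps its signs, so the graph stays unbalanced;
\item the Rayleigh quotient is unchanged;
\item the eigen-equation at $v_j$ gives $x_\ell=0$, and then the one at $v_i$ gives $x_k=0$, a contradiction.
\end{itemize}
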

	\noindent\emph{Proof of Claim 1.}~If $\mathbf{x}$ has two zero coordinates, say $x_i$ and $x_j$. Then
	$$
	\lambda_1(\Gamma^{\prime})=\mathbf{x}^{T}A(\Gamma^{\prime})\mathbf{x}\le \lambda_1(\Gamma^{\prime}-v_i-v_j)\le \lambda_1(K_{n-2},+)=n-3,
	$$
	where $(K_{n-2}, +)$ denotes the complete signed graph of order $n-2$ with all positive edges, a contradiction.
	
	\begin{claim}\label{connected}
		$\Gamma^{\prime}$ is connected.
	\end{claim}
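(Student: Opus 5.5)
We argue by contradiction, using the lower bound $\lambda_1(\Gamma^{\prime})>n-2$ established just above. Suppose $\Gamma^{\prime}$ is disconnected, and let $H_1,\dots,H_p$ with $p\ge 2$ be its components. The spectrum of $A(\Gamma^{\prime})$ is the union of the spectra of the $A(H_i)$, so $\lambda_1(\Gamma^{\prime})=\max_i \lambda_1(H_i)$. Let $H$ be a component attaining this maximum, and write $n_H=|V(H)|\le n-1$.

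First we rule out the case $n_H\le n-2$. Any signed graph on $m$ vertices has index at most $m-1$: for a unit vector $\mathbf{y}$ we have $\mathbf{y}^TA\mathbf{y}\le |\mathbf{y}|^TA(G)|\mathbf{y}|\le \lambda_1(K_m)=m-1$. This is the same estimate used in Claim \ref{one zero coordinate}. Hence $n_H\le n-2$ would give $\lambda_1(\Gamma^{\prime})\le n-3<n-2$, a contradiction. So $n_H=n-1$, and $\Gamma^{\prime}$ consists of $H$ together with a single isolated vertex, say $v$.

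This case also follows directly from Claim \ref{one zero coordinate}. Since the components are decoupled and $\lambda_1(\Gamma^{\prime})>0$, we may take the eigenvector $\mathbf{x}$ to be supported on $H$, so that $x_v=0$. That alone is allowed by Claim 1, so we bound the index instead. Suppose $H$ is balanced. Then $\lambda_1(H)=\lambda_1(\text{underlying graph})\le n-2$, a contradiction. So $H$ is unbalanced, and Theorem \ref{maximum index} applied to $H$ on $n-1$ vertices gives $\lambda_1(H)<n-2$, since the complete signed graph of order $n-1$ with one negative edge has index $\lambda_1(\Gamma_{1,n-1})=n-3<n-2$ by Lemma \ref{Xiong-Hou-AC-2024-2}. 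Either way we contradict $\lambda_1(\Gamma^{\prime})>n-2$, so $\Gamma^{\prime}$ is connected.

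We expect the only delicate point to be the component on $n-1$ vertices. It requires recognising that the extremal unbalanced index on $n-1$ vertices is $n-3$, which comes from Theorem \ref{maximum index} together with Lemma \ref{Xiong-Hou-AC-2024-2}, and that the balanced case is bounded by $\lambda_1(K_{n-1})=n-2$, which is still not enough to reach $\lambda_1(\Gamma^{\prime})>n-2$. As an alternative, one could add a positive edge from $v$ into $H$ and use Perron-type maximality. However, that route would have to check that no new unbalanced $tK_4$ is created, so the direct spectral bound above is simpler.
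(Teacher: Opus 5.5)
Your argument is correct, but it takes a different route from the paper's. The paper argues by local perturbation and extremality. It joins a vertex of the component carrying $\lambda_1$ to a vertex of another component by a positive edge. Such an edge is a bridge, so it lies in no $K_4$, and unbalancedness and $t\mathcal{K}_4^-$-freeness survive. The Rayleigh quotient then gives $\lambda_1(\Gamma^*)-\lambda_1(\Gamma^{\prime})\ge 2x_ix_j\ge 0$. Equality is excluded because it would force two zero coordinates, against Claim \ref{one zero coordinate}, so the maximality of $\lambda_1(\Gamma^{\prime})$ is contradicted. You instead use only the strict bound $\lambda_1(\Gamma^{\prime})>n-2$ and the estimate $\lambda_1\le m-1$ for any signed graph of order $m$. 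Your route therefore needs neither the maximality of $\Gamma^{\prime}$ nor Claim \ref{one zero coordinate}. It actually proves the more general fact that every signed graph of order $n$ with index exceeding $n-2$ is connected. The paper's route is the one that fits the perturbation template reused in the later claims.

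Your case split is unnecessary, and the $n_H=n-1$ case is not delicate at all. Since $n_H\le n-1$, you already have $\lambda_1(\Gamma^{\prime})=\lambda_1(H)\le n_H-1\le n-2$, which contradicts the strict inequality at once. The superfluous part also contains a misidentification. The complete signed graph of order $n-1$ with one negative edge is $\Gamma_{n-3,n-1}$, not $\Gamma_{1,n-1}$. Its index lies strictly between $n-3$ and $n-2$, not equal to $n-3$. The bound $<n-2$ you wanted still follows from the upper estimate in Lemma \ref{Xiong-Hou-AC-2024-2}, so nothing breaks, but the sentence as written is false. The aside claiming the case ``follows directly from Claim 1'' and then retracting it should simply be deleted. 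Your closing worry about the edge-adding route is also unfounded: an edge between two components lies on no cycle, so it cannot create any new $K_4$.
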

	\noindent\emph{Proof of Claim 2.}~By contradiction, assume that $\Gamma^{\prime}_1$ and $\Gamma^{\prime}_2$ are two distinct connected components of $\Gamma^{\prime}$, where $\lambda_1(\Gamma^{\prime})=\lambda_1(\Gamma^{\prime}_1)$. Without loss of generality, suppose that $v_i \in V(\Gamma^{\prime}_1)$ and $v_j \in V(\Gamma^{\prime}_2)$. Let $\Gamma^{*}$ be the graph obtained from $\Gamma^{\prime}$ by adding a positive edge $v_i v_j$. Clearly, $\Gamma^{*}$ is also unbalanced and $t\mathcal{K}_{4}^{-}$-free. By the Rayleigh principle, we have
	$$
	\lambda_1(\Gamma^{*})-\lambda_1(\Gamma^{\prime})\ge \mathbf{x}^{T}A(\Gamma^{*})\mathbf{x}-\mathbf{x}^{T}A(\Gamma^{\prime})\mathbf{x}=2x_i x_j\ge 0.
	$$
	If $\lambda_1(\Gamma^{*})=\lambda_1(\Gamma^{\prime})$, then $\mathbf{x}$ is also an eigenvector of $A(\Gamma^{*})$ corresponding to $\lambda_1(\Gamma^{*})$. Note that
	$$
	\lambda_1(\Gamma^{\prime})x_i=\sum_{v_k \in N_{\Gamma^{\prime}}(v_i)}\sigma^{\prime}(v_k v_i)x_k, ~~
	\lambda_1(\Gamma^{\prime})x_j=\sum_{v_k \in N_{\Gamma^{\prime}}(v_j)}\sigma^{\prime}(v_k v_j)x_k
	$$
	and
	$$
	\lambda_1(\Gamma^{*})x_i=\sum_{v_k \in N_{\Gamma^{\prime}}(v_i)}\sigma^{\prime}(v_k v_i)x_k+x_j, ~~
	\lambda_1(\Gamma^{*})x_j=\sum_{v_k \in N_{\Gamma^{\prime}}(v_j)}\sigma^{\prime}(v_k v_j)x_k+x_i.
	$$
	Then we have $x_i=x_j=0$, which contradicts  Claim \ref{one zero coordinate}. Hence, $\lambda_1(\Gamma^{*})>\lambda_1(\Gamma^{\prime})$, which contradicts the maximality of $\lambda_1(\Gamma^{\prime})$.
	
	\textbf{Notation.}~For $U\subseteq V(\Gamma)$, let $\Gamma[U]$ denote the signed subgraph of $\Gamma$ induced by $U$, with edge signs inherited from $\Gamma$. Sometimes, we say that $U$ induces $\Gamma[U]$.
	
	\begin{claim}
		The $C_3^-$ contains all negative edges of $\Gamma^{\prime}$.
	\end{claim}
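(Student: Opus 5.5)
We argue by contradiction via a local switching-and-edge-modification argument. Suppose $\Gamma^{\prime}$ has a negative edge $e=v_iv_j$ not lying on the triangle $C_3^-$ with vertex set $\{v_1,v_2,v_n\}$. Since the eigenvector $\mathbf{x}$ is non-negative, every negative edge only lowers the Rayleigh quotient. The plan is to produce a signed graph $\Gamma^{*}$ on the same vertex set that is still unbalanced and $t\mathcal{K}_{4}^{-}$-free, with $\lambda_1(\Gamma^{*})>\lambda_1(\Gamma^{\prime})$. This contradicts the maximality of $\lambda_1(\Gamma^{\prime})$.

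\emph{Construction of $\Gamma^{*}$.} Take $\Gamma^{*}$ to be $\Gamma^{\prime}$ with $e$ deleted. Then
$$\mathbf{x}^TA(\Gamma^{*})\mathbf{x}-\mathbf{x}^TA(\Gamma^{\prime})\mathbf{x}=2x_ix_j\ge 0.$$
Unbalancedness is preserved because $C_3^-$ survives: $e$ is not one of its edges. Freeness is also preserved: any unbalanced subgraph of $\Gamma^{*}$ is a subgraph of $\Gamma^{\prime}$ with the same signs, so $\Gamma^{*}$ is still $t\mathcal{K}_4^-$-free. Hence $\lambda_1(\Gamma^{*})\ge\lambda_1(\Gamma^{\prime})$.

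\emph{Strict inequality.} Suppose equality holds. Then $\mathbf{x}$ is also a $\lambda_1$-eigenvector of $A(\Gamma^{*})$. Comparing the eigen-equations at $v_i$ in $\Gamma^{\prime}$ and in $\Gamma^{*}$, as in the proof of Claim \ref{connected}, gives $x_j=0$; comparing them at $v_j$ gives $x_i=0$. This contradicts Claim \ref{one zero coordinate}. Therefore $\lambda_1(\Gamma^{*})>\lambda_1(\Gamma^{\prime})$, which contradicts the maximality of $\lambda_1(\Gamma^{\prime})$. So every negative edge of $\Gamma^{\prime}$ lies on $C_3^-$.

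\emph{Main obstacle.} The only delicate point is checking that the modification keeps the graph inside the class. Deletion handles this cleanly, since the class is closed under deleting edges off the witness triangle. Flipping $e$ to positive would be riskier: it could create new unbalanced $K_4$'s, for example with some other negative edge. Deletion avoids that issue, so I would use deletion.

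Note also that the $C_3^-$ must retain an odd number of negative edges, and the claim only asserts that all negative edges lie on it. If the triangle has three negative edges, that remains consistent with the claim; later claims presumably reduce it to one negative edge by switching.
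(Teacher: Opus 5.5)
Your proof is correct and is essentially the paper's argument: delete the offending negative edge and use the Rayleigh quotient with the non-negative eigenvector $\mathbf{x}$ to get $\lambda_1(\Gamma^{*})\ge\lambda_1(\Gamma^{\prime})$; then rule out equality by comparing the eigen-equations at both endpoints, which gives two zero coordinates and contradicts Claim~\ref{one zero coordinate}. Your version is in fact slightly more complete: the paper only treats negative edges with both endpoints in $V(\Gamma^{\prime})\setminus V(C_3^-)$, whereas your argument also covers a negative edge with exactly one endpoint on the triangle.
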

	\noindent\emph{Proof of Claim 3.}~If there is a negative edge $v_i v_j \in E(\Gamma[V(\Gamma^{\prime})\setminus V(C_3^-)])$, then construct a new graph $\Gamma^{*}$ obtained from $\Gamma^{\prime}$ by deleting $v_i v_j$. Obviously, $\Gamma^{*}$ is also unbalanced and $t\mathcal{K}_{4}^{-}$-free. By the Rayleigh principle,
	$$
	\lambda_1(\Gamma^{*})-\lambda_1(\Gamma^{\prime})\ge \mathbf{x}^{T}A(\Gamma^{*})\mathbf{x}-\mathbf{x}^{T}A(\Gamma^{\prime})\mathbf{x}=2x_i x_j\ge 0.
	$$
	If $\lambda_1(\Gamma^{*})=\lambda_1(\Gamma^{\prime})$, then $\mathbf{x}$ is also an eigenvector of $A(\Gamma^{*})$ corresponding to $\lambda_1(\Gamma^{*})$. Note that
	$$
	\lambda_1(\Gamma^{\prime})x_i=\sum_{v_k \in N_{\Gamma^{\prime}}(v_i)}\sigma^{\prime}(v_k v_i)x_k, ~~
	\lambda_1(\Gamma^{\prime})x_j=\sum_{v_k \in N_{\Gamma^{\prime}}(v_j)}\sigma^{\prime}(v_k v_j)x_k
	$$
	and
	$$
	\lambda_1(\Gamma^{*})x_i=\sum_{v_k \in N_{\Gamma^{\prime}}(v_i)}\sigma^{\prime}(v_k v_i)x_k+x_j, ~~
	\lambda_1(\Gamma^{*})x_j=\sum_{v_k \in N_{\Gamma^{\prime}}(v_j)}\sigma^{\prime}(v_k v_j)x_k+x_i.
	$$
	Then we have $x_i=x_j=0$, which contradicts Claim \ref{one zero coordinate}. Hence, $\lambda_1(\Gamma^{*})>\lambda_1(\Gamma^{\prime})$, which contradicts the maximality of $\lambda_1(\Gamma^{\prime})$.
	
	\begin{claim}\label{one negative edge}
		$\Gamma^{\prime}$ contains exactly one negative edge.
	\end{claim}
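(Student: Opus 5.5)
By Claim 3, every negative edge of $\Gamma^{\prime}$ lies in the triangle $C_3^-$ on $\{v_1,v_2,v_n\}$. Since this triangle is negative, it carries either exactly one or exactly three negative edges. So the plan is to rule out the case where all three edges $v_1v_2$, $v_1v_n$, $v_2v_n$ are negative.

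\textbf{Reducing the all-negative case by switching.} Suppose all three edges of the triangle are negative. Switch at $U=\{v_1\}$. This reverses the signs of all edges at $v_1$: the edges $v_1v_2$ and $v_1v_n$ become positive, $v_2v_n$ stays negative, and every other edge $v_1v_k$ (originally positive) becomes negative. The resulting graph $\Gamma^{\prime\prime}$ is switching equivalent to $\Gamma^{\prime}$. By Lemmas \ref{Zaslavsky-DAM-1982} and \ref{Hou-Tang-Wang-AMC-2019}, $\Gamma^{\prime\prime}$ has the same balanced cycles and the same index, so it is again an extremal $t\mathcal{K}_{4}^{-}$-free unbalanced signed graph. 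However, its eigenvector $D\mathbf{x}$, with $D$ the switching diagonal matrix, has a negative entry at $v_1$, so we cannot reuse the previous claims on it directly. It is cleaner to argue on $\Gamma^{\prime}$ itself.

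\textbf{Main argument on $\Gamma^{\prime}$.} Take the graph $\Gamma^{*}$ obtained from $\Gamma^{\prime}$ by changing the signs of $v_1v_2$ and $v_1v_n$ to positive, keeping $v_2v_n$ negative. Then:
\begin{enumerate}[(i)]
\item $\Gamma^{*}$ is still unbalanced, since the triangle $v_1v_2v_n$ now has exactly one negative edge.
\item $\Gamma^{*}$ has the same underlying graph as $\Gamma^{\prime}$ and its negative edges form a subset of those of $\Gamma^{\prime}$, namely the single edge $v_2v_n$.
\end{enumerate}
The key check is that $\Gamma^{*}$ is $t\mathcal{K}_{4}^{-}$-free. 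A copy of $K_4$ in $\Gamma^{*}$ is unbalanced exactly when it contains the edge $v_2v_n$, since a $K_4$ with a single negative edge contains a negative triangle. A copy of $K_4$ in $\Gamma^{\prime}$ that contains $v_2v_n$ is also unbalanced, because its triangles through $v_2v_n$ avoiding $v_1$ carry exactly one negative edge. So every unbalanced $K_4$ of $\Gamma^{*}$ is an unbalanced $K_4$ of $\Gamma^{\prime}$, and any member of $t\mathcal{K}_{4}^{-}$ in $\Gamma^{*}$ consists of $t$ unbalanced $K_4$'s. The point to settle is whether "unbalanced member of $tK_4$" means that the union is unbalanced rather than each copy. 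If it means the union is unbalanced, the same union in $\Gamma^{\prime}$ is also unbalanced, because it contains the edge $v_2v_n$ together with a triangle on it avoiding $v_1$ or through $v_1$, both negative. Either way, the copy transfers back to $\Gamma^{\prime}$, so $\Gamma^{*}$ is $t\mathcal{K}_{4}^{-}$-free.

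\textbf{Spectral comparison.} By the Rayleigh principle,
$$\lambda_1(\Gamma^{*})-\lambda_1(\Gamma^{\prime})\ge \mathbf{x}^TA(\Gamma^{*})\mathbf{x}-\mathbf{x}^TA(\Gamma^{\prime})\mathbf{x}=4x_1x_2+4x_1x_n\ge 0.$$
If equality holds, then $\mathbf{x}$ is an eigenvector of $A(\Gamma^{*})$ for the same eigenvalue. Comparing the eigen-equations at $v_2$ and at $v_n$ gives $2x_1=0$, and comparing them at $v_1$ gives $x_2+x_n=0$. Hence $x_1=x_2=x_n=0$, which gives at least two zero coordinates and contradicts Claim \ref{one zero coordinate}. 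Therefore $\lambda_1(\Gamma^{*})>\lambda_1(\Gamma^{\prime})$, contradicting the maximality of $\lambda_1(\Gamma^{\prime})$. So $\Gamma^{\prime}$ has exactly one negative edge.

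The main obstacle is the freeness transfer in the second step: confirming that every unbalanced $K_4$, or unbalanced union, created in $\Gamma^{*}$ was already unbalanced in $\Gamma^{\prime}$. This needs careful handling of the $K_4$'s that contain $v_1$ together with $v_2v_n$.
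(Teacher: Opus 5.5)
Your proof is correct and is essentially the paper's argument. The paper also assumes all three triangle edges are negative and flips two of them; it flips $v_1v_n$ and $v_2v_n$ rather than your pair at $v_1$, which is a purely symmetric choice. It then notes that every unbalanced $K_4$ of the new graph was already unbalanced in $\Gamma^{\prime}$, and obtains a strict increase of the index from the Rayleigh quotient together with Claim 1. Your explicit check of this $K_4^-$ transfer, including the $K_4$'s through $v_1$ and both readings of the definition, fills in what the paper dismisses as ``clearly''. The opening switching paragraph is an unused detour and can simply be dropped.
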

	\noindent\emph{Proof of Claim 4.}~By contradiction, assume that $C_3^-$ contains three negative edges of $\Gamma^{\prime}$. Let $\Gamma^{*}$ be the graph obtained from $\Gamma^{\prime}$ by reversing the signs of the negative edges $v_1 v_n$ and $v_2 v_n$.
	Clearly, for every $K_4^-$ in $\Gamma^{*}$,  its vertices also form a $K_4^-$ in $\Gamma^{\prime}$. Hence, $\Gamma^{*}$ is also unbalanced and $t\mathcal{K}_{4}^{-}$-free. By the Rayleigh principle,
	\[
	\lambda_1(\Gamma^{*})-\lambda_1(\Gamma^{\prime})\ge \mathbf{x}^{T}A(\Gamma^{*})\mathbf{x}-\mathbf{x}^{T}A(\Gamma^{\prime})\mathbf{x}=4x_n(x_1+x_2)\ge 0.
	\]
	If $\lambda_1(\Gamma^{*})=\lambda_1(\Gamma^{\prime})$, then $\mathbf{x}$ is also an eigenvector of $A(\Gamma^{*})$ corresponding to $\lambda_1(\Gamma^{*})$ and $x_n=0$. Note that
	$$
	\lambda_1(\Gamma^{\prime})x_n=\sum_{v_k \in N_{\Gamma^{\prime}}(v_n)}\sigma^{\prime}(v_k v_n)x_k
	$$
	and
	$$
	\lambda_1(\Gamma^{*})x_n=\sum_{v_k \in  N_{\Gamma^{\prime}}(v_n)}\sigma^{\prime}(v_k v_n)x_k+2x_1+2x_2.
	$$
	Then we have $x_1=x_2=x_n=0$, which contradicts Claim \ref{one zero coordinate}. Hence, $\lambda_1(\Gamma^{*})>\lambda_1(\Gamma^{\prime})$, which contradicts the maximality of $\lambda_1(\Gamma^{\prime})$.
	
	Without loss of generality, we suppose in the sequel that $v_1 v_n$ is the unique negative edge in $\Gamma^{\prime}$ and that $x_n\le x_1$. By Claim \ref{one zero coordinate}, $x_1>0$.
	\begin{claim}\label{x>=0}
		$x_i >0$ for $2\le i \le n-1$.
	\end{claim}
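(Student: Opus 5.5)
The plan is to argue by contradiction using the eigen-equation at the vertex with a hypothetical zero coordinate, together with Claims \ref{one zero coordinate}, \ref{connected} and \ref{one negative edge}.

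Suppose that $x_i=0$ for some $2\le i\le n-1$. We already know that $x_1>0$. By Claim \ref{one zero coordinate}, $v_i$ is then the unique vertex with zero coordinate, so $x_k>0$ for every $k\neq i$.

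By Claim \ref{one negative edge}, the only negative edge of $\Gamma^{\prime}$ is $v_1v_n$. Since $i\notin\{1,n\}$, every edge incident with $v_i$ is positive. The eigen-equation at $v_i$ therefore reads
$$
0=\lambda_1(\Gamma^{\prime})x_i=\sum_{v_k\in N_{\Gamma^{\prime}}(v_i)}x_k .
$$
Every term on the right is strictly positive, because $v_i$ is not its own neighbour and all other coordinates are positive. Hence the sum can vanish only if $N_{\Gamma^{\prime}}(v_i)=\emptyset$. That would make $v_i$ an isolated vertex, contradicting the connectedness of $\Gamma^{\prime}$ from Claim \ref{connected}; note that $n\ge 3$, so $\Gamma^{\prime}$ is not just the single vertex $v_i$.

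For $i=2$ the contradiction is even more direct. Here $v_2$ is adjacent to $v_1$ via the positive edge $v_1v_2$ of the $C_3^-$, and $x_1>0$, so the sum is already positive.

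No step here is a serious obstacle. The only care needed is to confirm that all edges at $v_i$ are positive, which is exactly what Claim \ref{one negative edge} together with $i\ne 1,n$ provides. This rules out cancellation in the eigen-equation.
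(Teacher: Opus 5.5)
Your proof is correct and follows the paper's argument: both use Claim \ref{one negative edge} to see that every edge at $v_i$ is positive and Claim \ref{connected} to guarantee that $v_i$ has a neighbour, which contradicts the eigen-equation $0=\lambda_1(\Gamma^{\prime})x_i=\sum_{v_k\in N_{\Gamma^{\prime}}(v_i)}x_k$. The paper asserts that this sum is $>0$ without saying why; your appeal to Claim \ref{one zero coordinate}, making all other coordinates strictly positive, supplies that step.
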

	\noindent\emph{Proof of Claim 5.}~By contradiction, assume that $x_i=0$ for some $i$. By Claims \ref{connected} and \ref{one negative edge}, we have $d_{\Gamma^{\prime}}(v_i)\ge 1$ and all edges incident to $v_i$ are positive. Hence,
	$$
	0=\lambda_1(\Gamma^{\prime})x_i=\sum_{v_k \in N_{\Gamma^{\prime}}(v_i)}\sigma^{\prime}(v_k v_i)x_k>0,
	$$
	a contradiction.
	\begin{claim}
		$N_{\Gamma^{\prime}}(v_n)\setminus (N_{\Gamma^{\prime}}(v_1)\cup \left \{v_1\right \})=\emptyset$ and $V(\Gamma^{\prime})\setminus  (N_{\Gamma^{\prime}}(v_n)\cup N_{\Gamma^{\prime}}(v_1))=\emptyset$.
	\end{claim}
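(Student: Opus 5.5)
The plan is to argue both parts by contradiction through an edge-addition/Rayleigh argument, exploiting that $v_1v_n$ is the unique negative edge, $x_n\le x_1$, and $x_i>0$ for $1\le i\le n-1$ (Claim \ref{x>=0}).

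\emph{First part.} Suppose some $v_j\in N_{\Gamma^{\prime}}(v_n)$ with $v_j\ne v_1$ and $v_j\notin N_{\Gamma^{\prime}}(v_1)$. Form $\Gamma^{*}$ by deleting the positive edge $v_jv_n$ and adding a positive edge $v_jv_1$. Then
\[
\mathbf{x}^TA(\Gamma^{*})\mathbf{x}-\mathbf{x}^TA(\Gamma^{\prime})\mathbf{x}=2x_j(x_1-x_n)\ge 0 .
\]
We must check that $\Gamma^{*}$ is still unbalanced, since $v_1v_2v_n$ must survive. If $v_j=v_2$ this is impossible, because $v_2\in N(v_1)$; so $v_j\neq v_2$ and $C_3^-$ persists. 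We must also check that $\Gamma^{*}$ is $t\mathcal{K}_4^-$-free. A $K_4^-$ now has to contain the edge $v_1v_n$. Any $K_4^-$ in $\Gamma^{*}$ containing the new edge $v_1v_j$ would need $v_n$, but $v_jv_n$ is absent. Hence every $K_4^-$ of $\Gamma^{*}$ is already one of $\Gamma^{\prime}$, so the family of negative $K_4$'s does not grow. For strictness: if equality held, $\mathbf{x}$ would be an eigenvector of $A(\Gamma^{*})$, and comparing the eigen-equations at $v_n$ gives $x_j=0$, contradicting Claim \ref{x>=0}. This contradicts maximality.

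A subtlety arises if $x_n=x_1$, where the Rayleigh difference is $0$. We handle it with the equality-case eigen-equation comparison as above, which still forces $x_j=0$ at $v_n$, a contradiction.

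\emph{Second part.} Suppose $v_j\notin N(v_n)\cup N(v_1)$ (so $j\ne 1,n$). Add a positive edge $v_1v_j$. The gain is $2x_1x_j>0$. Unbalancedness is kept, and a new $K_4^-$ would contain both $v_1v_n$ and $v_1v_j$, hence would need $v_jv_n$, which is absent. So $\Gamma^{*}$ is $t\mathcal{K}_4^-$-free with strictly larger index, a contradiction.

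The main obstacle is the $t\mathcal{K}_4^-$-freeness check. It rests on the observation that every $K_4^-$ contains the unique negative edge $v_1v_n$ (with all other edges positive, a $K_4$ is unbalanced iff it contains $v_1v_n$). Each edge modification can therefore create a $K_4^-$ only through a vertex adjacent to both $v_1$ and $v_n$, and both modifications were chosen to avoid creating such a vertex.
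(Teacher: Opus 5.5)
Your proposal is correct and follows essentially the same argument as the paper: for the first part you make the same switch (delete $v_jv_n$, add a positive edge $v_jv_1$) with Rayleigh gain $2x_j(x_1-x_n)\ge 0$ and rule out equality via the eigen-equations, and for the second part you add the same positive edge $v_1v_j$ with gain $2x_1x_j>0$. The differences are cosmetic: you compare the eigen-equations at $v_n$ rather than at $v_1$ (both give $x_j=0$), and you spell out the unbalancedness and $t\mathcal{K}_4^-$-freeness checks that the paper dismisses with ``clearly''.
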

	\noindent\emph{Proof of Claim 6.}~Otherwise, assume that $v_i \in N_{\Gamma^{\prime}}(v_n)\setminus (N_{\Gamma^{\prime}}(v_1)\cup \left \{v_1\right \}  )$ and $v_j \in V(\Gamma^{\prime})\setminus  (N_{\Gamma^{\prime}}(v_n)\cup N_{\Gamma^{\prime}}(v_1))$. Let $\Gamma^{*}$ be a signed graph obtained from $\Gamma^{\prime}$ by deleting $v_n v_i$ and adding a positive edge $v_1 v_i$. Clearly, $\Gamma^{*}$ is also unbalanced and $t\mathcal{K}_{4}^{-}$-free. Then
	$$
	\lambda_1(\Gamma^{*})-\lambda_1(\Gamma^{\prime})\ge \mathbf{x}^{T}A(\Gamma^{*})\mathbf{x}-\mathbf{x}^{T}A(\Gamma^{\prime})\mathbf{x}=2x_i(x_1-x_n)\ge 0.
	$$
	If $\lambda_1(\Gamma^{*})=\lambda_1(\Gamma^{\prime})$, then $\mathbf{x}$ is also an eigenvector of $A(\Gamma^{*})$ corresponding to $\lambda_1(\Gamma^{*})$. Note that
	\[\lambda_1(\Gamma^{\prime})x_1=\sum_{v_k \in N_{\Gamma^{\prime}}(v_1)}\sigma^{\prime}(v_k v_1)x_k\]
	and
	\[\lambda_1(\Gamma^{*})x_1=\sum_{v_k \in N_{\Gamma^{\prime}}(v_1)}\sigma^{\prime}(v_k v_1)x_k+x_i.\]
	Then we have $x_i=0$, which contradicts Claim \ref{x>=0}. Thus, $\lambda_1(\Gamma^{*})>\lambda_1(\Gamma^{\prime})$, which contradicts the maximality of $\lambda_1(\Gamma^{\prime})$.  Let $\Gamma^{**}$ be a signed graph obtained from $\Gamma^{\prime}$ by adding a positive edge $v_1 v_j$. Clearly, $\Gamma^{**}$ is also unbalanced and $t\mathcal{K}_{4}^{-}$-free. Then
	\[\lambda_1(\Gamma^{**})-\lambda_1(\Gamma^{\prime})\ge \mathbf{x}^{T}A(\Gamma^{**})\mathbf{x}-\mathbf{x}^{T}A(\Gamma^{\prime})\mathbf{x}=2x_1 x_j> 0,\]
	which contradicts the maximality of $\lambda_1(\Gamma^{\prime})$.
	\begin{claim}
		$\Gamma[N_{\Gamma^{\prime}}(v_1)\setminus (N_{\Gamma^{\prime}}(v_n)\cup \left \{v_n\right \})]$ is a complete graph with all positive edges.
	\end{claim}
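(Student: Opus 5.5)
Let $W=N_{\Gamma^{\prime}}(v_1)\setminus (N_{\Gamma^{\prime}}(v_n)\cup\{v_n\})$. I will argue by contradiction using the same edge-addition device as in Claims 2 and 6. Suppose two vertices $v_i,v_j\in W$ are not adjacent. Positivity is automatic: by Claim \ref{one negative edge}, $v_1v_n$ is the only negative edge, so every edge inside $W$ is positive. So the whole claim reduces to showing that $W$ is a clique.

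Let $\Gamma^{*}$ be obtained from $\Gamma^{\prime}$ by adding the positive edge $v_iv_j$. Then $\Gamma^{*}$ is unbalanced, since it still contains the negative triangle $v_1v_2v_n$. The key step is to check that $\Gamma^{*}$ is still $t\mathcal{K}_{4}^{-}$-free, which I would do by showing that no new $K_4^-$ is created.

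\begin{itemize}
\item Any $K_4^-$ in $\Gamma^{*}$ must contain the unique negative edge $v_1v_n$, because a signed $K_4$ all of whose edges are positive is balanced.
\item A $K_4$ that uses the new edge $v_iv_j$ and contains $v_1v_n$ must have vertex set $\{v_1,v_n,v_i,v_j\}$.
\item This would require $v_i\sim v_n$. But $v_i\notin N_{\Gamma^{\prime}}(v_n)$ by the definition of $W$.
\end{itemize}

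Hence every $K_4^-$ of $\Gamma^{*}$ is already a $K_4^-$ of $\Gamma^{\prime}$, and the family of $K_4^-$ copies is unchanged. In particular $\Gamma^{*}$ contains no member of $t\mathcal{K}_{4}^{-}$.

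Next I apply the Rayleigh principle with the eigenvector $\mathbf{x}$:
$$
\lambda_1(\Gamma^{*})-\lambda_1(\Gamma^{\prime})\ge 2x_ix_j.
$$
Since $v_i,v_j\ne v_1,v_n$, Claim \ref{x>=0} gives $x_i,x_j>0$, so the right-hand side is strictly positive. This contradicts the maximality of $\lambda_1(\Gamma^{\prime})$. Therefore $W$ induces a complete graph, and by the first paragraph all its edges are positive.

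The only delicate point is the freeness check in the second paragraph. The argument there relies on two earlier facts: Claim \ref{one negative edge}, so that every $K_4^-$ must use $v_1v_n$, and the defining property of $W$ that its vertices are not adjacent to $v_n$. I expect no further obstacle.
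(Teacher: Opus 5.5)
Your proof is correct and follows the same argument as the paper: add the missing positive edge $v_iv_j$ inside $N_{\Gamma'}(v_1)\setminus(N_{\Gamma'}(v_n)\cup\{v_n\})$, then use the Rayleigh quotient with $x_i,x_j>0$ to contradict the maximality of $\lambda_1(\Gamma')$. Your explicit check that no new $K_4^-$ arises fills in what the paper dismisses as ``clearly'': any $K_4^-$ must contain $v_1v_n$, and $\{v_1,v_n,v_i,v_j\}$ fails because $v_iv_n\notin E(\Gamma')$; the same holds for your remark that positivity of the edges follows from Claim 4.
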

	\noindent\emph{Proof of Claim 7.}~Otherwise, assume that $v_i, v_j \in N_{\Gamma^{\prime}}(v_1)\setminus (N_{\Gamma^{\prime}}(v_n)\cup \left \{v_n\right \})$ and $v_i$ is not adjacent to $v_j$. Let $\Gamma^{*}$ be a signed graph obtained from $\Gamma^{\prime}$ by adding a positive edge $v_i v_j$. Clearly, $\Gamma^{*}$ is also unbalanced and $t\mathcal{K}_{4}^{-}$-free. Then
	\[\lambda_1(\Gamma^{*})-\lambda_1(\Gamma^{\prime})\ge \mathbf{x}^{T}A(\Gamma^{*})\mathbf{x}-\mathbf{x}^{T}A(\Gamma^{\prime})\mathbf{x}=2x_i x_j> 0,\]
	which contradicts the maximality of $\lambda_1(\Gamma^{\prime})$.
	
	\begin{claim}
		For any $v_i\in N_{\Gamma^{\prime}}(v_1)\setminus (N_{\Gamma^{\prime}}(v_n)\cup \left \{v_n\right \})$ and $v_j\in N_{\Gamma^{\prime}}(v_1)\cap N_{\Gamma^{\prime}}(v_n)$, $v_i v_j\in E(\Gamma^{\prime})$.
	\end{claim}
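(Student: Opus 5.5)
The plan is the same edge-addition argument as in Claims 6 and 7. Suppose, for a contradiction, that there are $v_i\in N_{\Gamma^{\prime}}(v_1)\setminus (N_{\Gamma^{\prime}}(v_n)\cup \left \{v_n\right \})$ and $v_j\in N_{\Gamma^{\prime}}(v_1)\cap N_{\Gamma^{\prime}}(v_n)$ with $v_iv_j\notin E(\Gamma^{\prime})$. Note that $v_i\neq v_j$, since $v_i$ is not adjacent to $v_n$ while $v_j$ is. Let $\Gamma^{*}$ be obtained from $\Gamma^{\prime}$ by adding the positive edge $v_iv_j$. The work is to show that $\Gamma^{*}$ is still an unbalanced $t\mathcal{K}_{4}^{-}$-free signed graph, and then to use the Rayleigh principle.

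\emph{Unbalancedness.} $\Gamma^{*}$ still contains the negative triangle on $\{v_1,v_2,v_n\}$, so it is unbalanced.

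\emph{$t\mathcal{K}_{4}^{-}$-freeness.} By Claim \ref{one negative edge}, $v_1v_n$ is the unique negative edge of $\Gamma^{\prime}$, and it remains the unique negative edge of $\Gamma^{*}$.
\begin{itemize}
\item A copy of $K_4$ with all edges positive is balanced. Hence every member of $\mathcal{K}_4^-$ in $\Gamma^{*}$ must contain the edge $v_1v_n$.
\item Conversely, any $K_4$ containing $v_1v_n$ is unbalanced, because the triangle formed by $v_1$, $v_n$ and a third vertex of that $K_4$ is negative.
\item So the $\mathcal{K}_4^-$'s of $\Gamma^{*}$ are exactly the $K_4$'s of $\Gamma^{*}$ through both $v_1$ and $v_n$.
\item A $K_4$ of $\Gamma^{*}$ that is not already in $\Gamma^{\prime}$ must use the new edge $v_iv_j$. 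If it is also a $\mathcal{K}_4^-$, its vertex set is $\{v_1,v_n,v_i,v_j\}$, which would force $v_iv_n\in E(\Gamma^{*})$. This contradicts the choice of $v_i$.
\item Therefore $\Gamma^{*}$ has exactly the same copies of $\mathcal{K}_4^-$ as $\Gamma^{\prime}$. Any member of $t\mathcal{K}_{4}^{-}$ in $\Gamma^{*}$ would then already lie in $\Gamma^{\prime}$, so $\Gamma^{*}$ is $t\mathcal{K}_{4}^{-}$-free.
\end{itemize}
This verification is the only step that needs care.

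\emph{Spectral comparison.} By the Rayleigh principle,
\[
\lambda_1(\Gamma^{*})-\lambda_1(\Gamma^{\prime})\ge \mathbf{x}^{T}A(\Gamma^{*})\mathbf{x}-\mathbf{x}^{T}A(\Gamma^{\prime})\mathbf{x}=2x_i x_j .
\]
Since $v_i,v_j\notin\{v_1,v_n\}$, Claim \ref{x>=0} gives $x_i,x_j>0$. Hence $\lambda_1(\Gamma^{*})>\lambda_1(\Gamma^{\prime})$, contradicting the maximality of $\lambda_1(\Gamma^{\prime})$. Therefore $v_iv_j\in E(\Gamma^{\prime})$, which proves the claim.
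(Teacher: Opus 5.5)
Your proof is correct and follows the paper's argument: add the positive edge $v_iv_j$, note that the new graph is still unbalanced and $t\mathcal{K}_{4}^{-}$-free, and obtain $\lambda_1(\Gamma^{*})-\lambda_1(\Gamma^{\prime})\ge 2x_ix_j>0$ from the Rayleigh principle and Claim 5. The only difference is that you justify the $t\mathcal{K}_{4}^{-}$-freeness, which the paper asserts with ``Clearly'': every unbalanced $K_4$ must use the unique negative edge $v_1v_n$, so a new one would be $\{v_1,v_n,v_i,v_j\}$ and would need the absent edge $v_iv_n$.
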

	\noindent\emph{Proof of Claim 8.}~Otherwise, assume that $v_i v_j\notin  E(\Gamma^{\prime})$. Let $\Gamma^{*}$ be a signed graph obtained from $\Gamma^{\prime}$ by adding a positive edge $v_i v_j$. Clearly, $\Gamma^{*}$ is also unbalanced and $t\mathcal{K}_{4}^{-}$-free. Then
	\[\lambda_1(\Gamma^{*})-\lambda_1(\Gamma^{\prime})\ge \mathbf{x}^{T}A(\Gamma^{*})\mathbf{x}-\mathbf{x}^{T}A(\Gamma^{\prime})\mathbf{x}=2x_i x_j> 0,\]
	which contradicts the maximality of $\lambda_1(\Gamma^{\prime})$.
	
	\begin{claim}
		$|N_{\Gamma^{\prime}}(v_1)\cap N_{\Gamma^{\prime}}(v_n)|=r$.	
	\end{claim}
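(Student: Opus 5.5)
Write $C=N_{\Gamma^{\prime}}(v_1)\cap N_{\Gamma^{\prime}}(v_n)$, $c=|C|$, and $A=N_{\Gamma^{\prime}}(v_1)\setminus (N_{\Gamma^{\prime}}(v_n)\cup\{v_n\})$. The plan is to show that $c<r$ and $c>r$ are both impossible.

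\emph{Setting up.} By Claim 6, $V(\Gamma^{\prime})=\{v_1,v_n\}\cup C\cup A$. By Claims 7 and 8, $A$ induces a positive clique and is completely joined to $C$. Since $v_1v_n$ is the only negative edge (Claim 4), a $K_4$ in $\Gamma^{\prime}$ is unbalanced exactly when it contains $v_1v_n$. Such a copy has the form $\{v_1,v_n,a,b\}$ with $a,b\in C$ and $ab\in E(\Gamma^{\prime})$. Two different edges inside $C$ give two different copies of $K_4^-$. Hence $\Gamma^{\prime}$ is $t\mathcal{K}_{4}^{-}$-free if and only if
\[ e(\Gamma^{\prime}[C])\le t-1=\binom{r}{2}, \]
using $r=\frac{1+\sqrt{8t-7}}{2}$, i.e.\ $t=\binom r2+1$. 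In particular $\Gamma_{r,n}$ (where $C$ is a clique on $r$ vertices) is feasible. It is also unbalanced and well defined, since $t\le\binom{n-2}{2}$ forces $r\le n-2$.

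\emph{Case $c<r$.} Then $\Gamma^{\prime}$ is a spanning subgraph of $\Gamma_{c,n}$, obtained by completing $C$ if necessary. If $c=0$, then $v_1v_n$ lies in no triangle, contradicting $v_1v_2v_n$ being a $C_3^-$; so $c\ge1$. If $\Gamma^{\prime}\neq\Gamma_{c,n}$, adding the missing positive edges inside $C$ keeps the graph feasible, because $\binom c2<\binom r2$. By the Rayleigh argument with $\mathbf{x}>0$ on $C$ (Claim 5), each such addition strictly increases $\lambda_1$. So $\lambda_1(\Gamma^{\prime})\le\lambda_1(\Gamma_{c,n})$, and Corollary \ref{Xiong-Hou-AC-2024-3} gives $\lambda_1(\Gamma_{c,n})<\lambda_1(\Gamma_{r,n})$. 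This contradicts maximality.

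\emph{Case $c>r$ (the main obstacle).} Here $\Gamma^{\prime}[C]$ has at most $\binom r2$ edges on more than $r$ vertices, so it is not complete. I will first normalize $\Gamma^{\prime}[C]$ by a Kelmans-type shifting argument. For $u,w\in C$ with $x_u\ge x_w$, moving an edge $wz$ to $uz$ (for $z\in C$) keeps the edge count and does not decrease $\mathbf{x}^TA\mathbf{x}$. Using positivity of $\mathbf{x}$, the extremal $\Gamma^{\prime}[C]$ can then be taken to be a threshold graph: a clique $V$ joined completely to an independent set $Q$, possibly plus further isolated vertices of $C$. Isolated vertices of $C$ are adjacent to everything in $A$ and to $v_1,v_n$. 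Moving such a vertex into $A$ (deleting its edge to $v_n$ and adding edges to the rest of $A$) should increase $\lambda_1$ by a Claim 6-type argument, since $x_1\ge x_n$. After this, $\Gamma^{\prime}$ is exactly of the form $\Sigma_{k,n}$ with $u_1=v_1$, $u_2=v_n$, the clique $K_{n-2-r-k}=V$, $k\cdot K_1=Q$, and the block $K_r$ of Figure \ref{figure1} playing the role of $A$, its size being a free parameter there.

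Since $\lambda_1(\Gamma^{\prime})\ge\lambda_1(\Gamma_{r,n})>n-2$, Lemma \ref{sigma_{k,n}} applies repeatedly. It lets me decrease $k$ while moving one vertex from $Q$ into the clique, keeping the same $A$. At $k=1$, $V\cup Q$ is a clique, so the graph is $\Gamma_{c',n}$ with $c'=|V|+1$. At each step I have to track that the edge count in $C$ does not exceed $\binom r2$; if it does, I instead stop at $c'=r$. Either way I end at a feasible graph whose index is strictly larger than $\lambda_1(\Gamma^{\prime})$, or at $\Gamma_{c',n}$ with $c'\le r$, and Corollary \ref{Xiong-Hou-AC-2024-3} again bounds its index by $\lambda_1(\Gamma_{r,n})$. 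This contradicts maximality unless $c=r$.

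The delicate points are the shifting normalization and verifying the hypotheses of Lemma \ref{sigma_{k,n}}, in particular $|A|\ge2$, which I expect to handle by the same edge-moving comparison. Hence $c=r$.
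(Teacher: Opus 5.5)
\paragraph{The case $c>r$ has a genuine gap.}
Your setup is sound. The reformulation ``$\Gamma'$ is $t\mathcal{K}_4^-$-free iff $e(\Gamma'[C])\le\binom{r}{2}$'', together with your case $c<r$, is a more elementary route to $c\ge r$ than the paper's. The case $c>r$, however, fails in three places.

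\emph{Shifting does not give the shape you claim.} Shifting only yields nested neighbourhoods in $\Gamma'[C]$ (if $x_u\ge x_w$ then $N_C(w)\setminus\{u\}\subseteq N_C(u)$), i.e.\ a threshold graph. A threshold graph need not be a clique completely joined to an independent set plus isolated vertices. For example, a triangle $abc$ plus a vertex $d$ adjacent only to $a$ has nested neighbourhoods but is not of that shape.

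\emph{Even that shape is not $\Sigma_{k,n}$.} Consider the graph Lemma \ref{sigma_{k,n}} actually treats, as given by its adjacency and quotient matrices and the figure; the prose definition misstates the neighbours of $u_2$. There, $u_2$ is adjacent to the $K_r$ and $k\cdot K_1$ blocks. So the common neighbourhood of $u_1,u_2$ is $K_r\cup k\cdot K_1$, with no edges between these two blocks, and $K_{n-2-r-k}$ is the clique outside it, joined to everything except $u_2$. Passing from $k$ to $k-1$ moves an isolated vertex of the common neighbourhood \emph{out} of it, while the $r$-clique stays fixed. Your identification $K_r\leftrightarrow A$ would make $A$ adjacent to $v_n$. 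Even under the prose reading, $K_r$ is not joined to $k\cdot K_1$, whereas your $A$ is joined to all of $C$. The lemma never moves a vertex of $Q$ into a clique inside $C$, so your iteration and its ``stop at $c'=r$'' fallback have no support.

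\emph{The isolated-vertex move is misdescribed.} The vertex $w$ is already adjacent to all of $A$. What you must do is delete $wv_n$ and join $w$ to $C\setminus\{w\}$. The inequality you then need is $\sum_{z\in C\setminus\{w\}}x_z>x_n$. It follows from $\lambda_1(x_1-x_w)=(\lambda_1-2)x_n$ (from the eigen-equations at $v_1,v_n,w$), not from $x_1\ge x_n$.

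\paragraph{What is missing is an $r$-clique inside $C$.}
The paper gets it from Lemma \ref{Xiong-Hou-AC-2024-1}. Since $\lambda_1(\Gamma')\ge\lambda_1(\Gamma_{r,n})>\lambda_1(\Gamma_{r-1,n})$ (Corollary \ref{Xiong-Hou-AC-2024-3}), $\Gamma'$ is not $\mathcal{K}_{r+2}^-$-free. Any $K_{r+2}^-$ contains the unique negative edge $v_1v_n$, so its other $r$ vertices form a clique $B\subseteq C$.

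Your own edge count $e(\Gamma'[C])\le\binom{r}{2}=e(\Gamma'[B])$ then forces $\Gamma'[C]=K_r\cup(c-r)K_1$, i.e.\ $\Gamma'\cong\Sigma_{c-r,n}$ in the matrix sense. Lemma \ref{sigma_{k,n}} brings $k$ down to $1$. The final Rayleigh move (delete $uv_n$, join $u$ to $B$) then gives $\Gamma_{r,n}$ with strictly larger index. The corrected isolated-vertex move above would in fact handle every $k\ge1$ directly. Neither your $c<r$ argument nor the shifting supplies this clique when $c>r$, and that clique is exactly what the case requires.
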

	\noindent\emph{Proof of Claim 9.}~Note that $t\ge 2$ and $r=\frac{1+\sqrt{8t-7}}{2}\ge 2$ are positive integers. Then $ \Gamma_{r,n} $ is unbalanced and $t\mathcal{K}_{4}^{-}$-free. By Corollary \ref{Xiong-Hou-AC-2024-3}, $\lambda_1(\Gamma^{\prime})\ge \lambda_1(\Gamma_{r,n}) > \lambda_1(\Gamma_{r-1,n})$. If $\Gamma^{\prime}$ is $\mathcal{K}_{r+2}^{-}$-free, by Lemma \ref{Xiong-Hou-AC-2024-1}, $\lambda_1(\Gamma^{\prime})\le \lambda_1(\Gamma_{r-1,n})$, a contradiction. Then $\Gamma^{\prime}$ has an induced $K_{r+2}^-$ subgraph. Thus, $|N_{\Gamma^{\prime}}(v_1)\cap N_{\Gamma^{\prime}}(v_n)|\ge r$.
	
	We next prove that $|N_{\Gamma^{\prime}}(v_1)\cap N_{\Gamma^{\prime}}(v_n)|=r$ by contradiction.
	
	Assume that $|N_{\Gamma^{\prime}}(v_1)\cap N_{\Gamma^{\prime}}(v_n)|\ge r+1$. For convenience, denote $A=(N_{\Gamma^{\prime}}(v_1)\cap N_{\Gamma^{\prime}}(v_n))\setminus V(K_{r+2}^-)$ and $B= V(K_{r+2}^-)\setminus \left \{v_1, v_n\right\}$. Then for any $v\in A$ and $u\in B$, we have $vu \not \in E(\Gamma^{\prime})$. Otherwise, assume that  $vu\in E(\Gamma^{\prime})$. Clearly, $\Gamma^{\prime}[\left \{v_1,v_n,u,v\right\} ]$ is a $K_{4}^-$. Note that $K_{r+2}^-$ has a $(t-1)K_{4}^-$. Hence, $\Gamma^{\prime}$ has a $tK_{4}^-$, a contradiction. Now, $\Gamma^{\prime}\cong \Sigma_{k,n}$. By Lemma \ref{sigma_{k,n}} and the maximality of $\lambda_1(\Gamma^{\prime})$, we have $\Gamma^{\prime}\cong \Sigma_{1,n}$. Let $A=\left \{u\right \}$ and $\Gamma^{*}$ be a signed graph obtained from $\Gamma^{\prime}$ by deleting $uv_n$ and adding a positive edge $uv_j$ for any $j\in B$. Clearly, $\Gamma^{*}$ is also unbalanced and $t\mathcal{K}_{4}^{-}$-free. By the Rayleigh principle,
	\[\lambda_1(\Gamma^{*})-\lambda_1(\Gamma^{\prime})\ge \mathbf{x}^{T}A(\Gamma^{*})\mathbf{x}-\mathbf{x}^{T}A(\Gamma^{\prime})\mathbf{x} =2\left(\sum_{v_j\in B}x_j-x_n\right)x_u.\]
	Note that
	\begin{equation}\label{equation1}
		\lambda_{1}(\Gamma^{\prime})x_n=-x_1+x_u+\sum_{v_j\in B}x_j,
	\end{equation}
	\begin{equation}\label{equation2}
		\lambda_{1}(\Gamma^{\prime})x_1=-x_n+x_u+\sum_{v_j\in B}x_j+\sum_{v_k\in N_{\Gamma^{\prime}}(v_1)\setminus N_{\Gamma^{\prime}}(v_n)}x_k,
	\end{equation}
	\begin{equation}\label{equation3}
		\lambda_{1}(\Gamma^{\prime})x_u=x_1+x_n+\sum_{v_k\in N_{\Gamma^{\prime}}(v_1)\setminus N_{\Gamma^{\prime}}(v_n)}x_k.
	\end{equation}
	By Equations \eqref{equation1}, \eqref{equation2} and \eqref{equation3}, we have
	\[
	\lambda_{1}(\Gamma^{\prime})(x_1-x_u)=-x_1+x_u+\sum_{v_j\in B}x_j-2x_n= (\lambda_{1}(\Gamma^{\prime})-2 ) x_n.
	\]
	Recall that $n$ is sufficiently large. Then $\lambda_{1}(\Gamma^{\prime})>n-2>2$. Hence, $x_1\ge x_u$ with equality holding if and only if $x_n=0$.
	By Equation \eqref{equation1}, we have
	\[
	\sum_{v_j\in B}x_j-x_n=(\lambda_{1}(\Gamma^{\prime})-1)x_n+x_1-x_u \ge(\lambda_{1}(\Gamma^{\prime})-1)x_n\ge 0.
	\]
	with equality holding if and only if $x_n=0$ and $x_1=x_u$.
	Hence, $\lambda_1(\Gamma^{*})\ge\lambda_1(\Gamma^{\prime})$. If $x_n=0$ and $x_1=x_u$, by Equations \eqref{equation2} and \eqref{equation3}, we have $\sum_{v_j\in B}x_j=0$, a contradiction.
	Thus, $\lambda_1(\Gamma^{*})>\lambda_1(\Gamma^{\prime})$, which contradicts the maximality of $\lambda_1(\Gamma^{\prime})$.
	
	Therefore, $\Gamma^{\prime}=\Gamma_{r,n}$, which implies that $\Gamma \sim \Gamma_{r,n}$. This completes the proof. \qed
\end{Tproof}

Denote by $-\Gamma$ the \emph{negation} of a signed graph $\Gamma$, which is obtained by reversing the sign of each edge in $\Gamma$. Clearly, the eigenvalues of $-\Gamma$ are obtained by reversing the signs of the eigenvalues of $\Gamma$. Let $\omega_{b}(\Gamma)$ denote the \emph{balanced clique number} of $\Gamma$, which
is the order of the largest balanced clique of $\Gamma$.
\begin{lemma}\emph{(See \cite[Prosition~5]{Wang-Yan-Qian-LAA-2021})}\label{Wang-Yan-Qian-LAA-2021}
	Let $\Gamma$ be a signed graph of order $n$. Then
	\[\lambda_1(\Gamma)\le n\left(1-\frac{1}{\omega_{b}(\Gamma)}\right).\]
\end{lemma}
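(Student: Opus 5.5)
We prove $\lambda_1(\Gamma)\le n\left(1-\frac{1}{\omega_b(\Gamma)}\right)$ by the standard Motzkin--Straus route, adapted to signed graphs. Two preliminary reductions come first.

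By Lemma \ref{Sun-Liu-Lan-LAA-2022}, we may switch $\Gamma$ to some $\Gamma^{\prime}$ whose index $\lambda_1(\Gamma^{\prime})=\lambda_1(\Gamma)$ has a non-negative unit eigenvector $\mathbf{x}$. Switching maps balanced cliques to balanced cliques, since it preserves the sign of every cycle, so $\omega_b(\Gamma^{\prime})=\omega_b(\Gamma)$. We also have
\[\lambda_1(\Gamma)=\mathbf{x}^TA(\Gamma^{\prime})\mathbf{x}\le \sum_{v_iv_j\in E^+(\Gamma^{\prime})}2x_ix_j,\]
because negative edges contribute non-positively when $\mathbf{x}\ge 0$. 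Let $H$ be the unsigned spanning subgraph formed by the positive edges of $\Gamma^{\prime}$. Every clique of $H$ is an all-positive clique of $\Gamma^{\prime}$, hence balanced, so $\omega(H)\le \omega:=\omega_b(\Gamma)$.

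Next we bound the quadratic form. Motzkin--Straus gives, for $\mathbf{y}\ge 0$ with $\sum y_i=1$,
\[\sum_{ij\in E(H)}2y_iy_j\le 1-\frac{1}{\omega(H)}.\]
Apply this with $y_i=x_i/\|\mathbf{x}\|_1$. Then
\[\lambda_1(\Gamma)\le \|\mathbf{x}\|_1^2\left(1-\frac1\omega\right)\le n\|\mathbf{x}\|_2^2\left(1-\frac1\omega\right)=n\left(1-\frac1\omega\right),\]
where the middle step is Cauchy--Schwarz. This is exactly the claimed bound, and it is the Wilf-type inequality transported to signed graphs.

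The step most likely to need care is the claim that only positive edges matter. This holds because the eigenvector is non-negative after switching; negative edges do still affect which switching class we are in, but they only decrease $\mathbf{x}^TA\mathbf{x}$ once $\mathbf{x}\ge0$. If one prefers not to quote Motzkin--Straus, it can be proved directly: take a maximizer of the quadratic form on the simplex with minimal support. If two support vertices are non-adjacent, shifting weight between them keeps the form linear and does not decrease it, which shrinks the support. So the support is a clique of size at most $\omega$, and on a clique of size $s$ the form is at most $1-1/s$.
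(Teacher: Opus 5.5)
Your proof is correct and complete. The paper gives no proof of this lemma; it imports it from Wang--Yan--Qian. Your argument is the standard Wilf/Motzkin--Straus derivation adapted to signed graphs, with each step justified:
\begin{itemize}
\item switching to a non-negative eigenvector changes neither the spectrum nor $\omega_b$;
\item with $\mathbf{x}\ge 0$, deleting the negative edges can only increase $\mathbf{x}^TA\mathbf{x}$;
\item every clique of the positive subgraph $H$ is an all-positive, hence balanced, clique, so $\omega(H)\le\omega_b(\Gamma)$.
\end{itemize}
The Motzkin--Straus step (including your minimal-support sketch) and the Cauchy--Schwarz bound $\|\mathbf{x}\|_1^2\le n$ are fine. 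The degenerate case $\omega=1$, where there are no edges, is trivially covered.

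If you want to shorten the last part, write $\mathbf{x}^TA(\Gamma^{\prime})\mathbf{x}\le \mathbf{x}^TA(H)\mathbf{x}\le \lambda_1(H)$ and quote Wilf's bound $\lambda_1(H)\le n\left(1-\frac{1}{\omega(H)}\right)$ directly.
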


\begin{cor}
	Let $t\ge 2$ be a positive integer such that $r=\frac{1+\sqrt{8t-7}}{2}$ is an integer and $r\le\left\lfloor \frac{n}{2}\right\rfloor-2$. If $\Gamma$ is a $t\mathcal{K}_{4}^{-}$-free unbalanced signed graph of sufficiently large order $n$, then
	\[\rho(\Gamma)\le \rho(\Gamma_{r,n})\]
	with equality holding if and only if $\Gamma \sim \Gamma_{r,n}$.
\end{cor}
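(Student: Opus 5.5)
Since $\rho(\Gamma)=\max\{\lambda_1(\Gamma),-\lambda_n(\Gamma)\}=\max\{\lambda_1(\Gamma),\lambda_1(-\Gamma)\}$, the plan is to treat the index of $\Gamma$ with Theorem~\ref{tK4} and the index of $-\Gamma$ with Lemma~\ref{Wang-Yan-Qian-LAA-2021}, and to show that the latter never competes.

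First, $\rho(\Gamma_{r,n})\ge\lambda_1(\Gamma_{r,n})\ge n-2$ by Lemma~\ref{Xiong-Hou-AC-2024-2}. If $\rho(\Gamma)=\lambda_1(\Gamma)$, then Theorem~\ref{tK4} applies. The hypothesis $r\le\lfloor n/2\rfloor-2$ gives $t=\binom{r}{2}+1\le\binom{n-2}{2}$, so we are in its first case and
\[
\rho(\Gamma)=\lambda_1(\Gamma)\le\lambda_1(\Gamma_{r,n})\le\rho(\Gamma_{r,n}).
\]
If equality holds throughout, Theorem~\ref{tK4} forces $\Gamma\sim\Gamma_{r,n}$. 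Conversely, since switching preserves the spectrum (Lemma~\ref{Hou-Tang-Wang-AMC-2019}), any $\Gamma\sim\Gamma_{r,n}$ attains equality.

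The remaining case is $\rho(\Gamma)=-\lambda_n(\Gamma)=\lambda_1(-\Gamma)$. Here I would apply Lemma~\ref{Wang-Yan-Qian-LAA-2021} to $-\Gamma$, which gives $\lambda_1(-\Gamma)\le n(1-1/\omega_b(-\Gamma))$. It then suffices to show $\omega_b(-\Gamma)\le 2$ up to a bounded exception, which in fact gives $\lambda_1(-\Gamma)\le n/2<n-2$. A balanced clique of $-\Gamma$ on $q$ vertices is a clique $Q$ of $\Gamma$ whose triangles are all negative in $\Gamma$: a triangle has three edges, so negation flips its sign. This is the key step and the main obstacle.

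For $q\ge 4$, every $K_4$ inside $Q$ has all four of its triangles negative in $\Gamma$, so it is a $K_4^-$ (any negative cycle makes it unbalanced). Then $Q$ contains $\binom{q}{4}$ distinct copies of $K_4^-$. Since $\Gamma$ is $t\mathcal{K}_4^-$-free, we need $\binom{q}{4}<t$, so $q$ is bounded by a constant depending only on $t$, hence only on $r$. That constant bound is not enough by itself, because it only yields $\lambda_1(-\Gamma)\le n(1-1/q)$, which can exceed $n-2$.

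To close this gap I would first try to sharpen the bound to $\omega_b(-\Gamma)\le 3$, or use $r\le n/2-2$ to compare against $n-2$. If that fails, I would use a direct argument instead. A balanced clique of $-\Gamma$ on $q\ge 4$ vertices is switching equivalent to an all-negative $K_q$ in $\Gamma$, and the bounded number of such cliques contributes only $O(1)$. More precisely, $-\Gamma$ restricted away from a bounded set has balanced clique number at most $3$, so by Lemma~\ref{Wang-Yan-Qian-LAA-2021}, interlacing and a perturbation bound, $\lambda_1(-\Gamma)\le \frac{2n}{3}+O(1)<n-2$ for $n$ sufficiently large.

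In either case $\rho(\Gamma)=\lambda_1(-\Gamma)<n-2\le\rho(\Gamma_{r,n})$ with strict inequality, so equality can occur only in the first case, where it forces $\Gamma\sim\Gamma_{r,n}$.
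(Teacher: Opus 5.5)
Your overall plan matches the paper's. You split according to whether $\rho(\Gamma)=\lambda_1(\Gamma)$ or $\rho(\Gamma)=\lambda_1(-\Gamma)$. You handle the first case, including the equality characterization, with Theorem~\ref{tK4}. You rule out the second case with Lemma~\ref{Wang-Yan-Qian-LAA-2021} after bounding $\omega_b(-\Gamma)$ through $t\mathcal{K}_4^-$-freeness. You also find the right ingredient: a balanced $q$-clique of $-\Gamma$ is a clique of $\Gamma$ all of whose $\binom{q}{4}$ copies of $K_4$ are unbalanced, so $\binom{q}{4}\le t-1$.

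The gap is that you then discard this and never finish the second case correctly. Your objection that $n(1-1/q)$ \emph{can exceed} $n-2$ is mistaken, since $n(1-1/q)\le n-2$ as soon as $n\ge 2q$. Concretely, $t-1=\binom{r}{2}<\binom{r+3}{4}$, so your own count gives $\omega_b(-\Gamma)\le r+2$. Hence $\lambda_1(-\Gamma)\le \frac{r+1}{r+2}\,n\le n-2$, precisely because $r+2\le\lfloor n/2\rfloor$. This excludes the case once you combine it with the \emph{strict} inequality $\lambda_1(\Gamma_{r,n})>n-2$ from Lemma~\ref{Xiong-Hou-AC-2024-2}, which holds since $r\ge 2$; your non-strict $\ge n-2$ is not enough there. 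This is exactly the paper's proof. The paper phrases the clique bound as: every $K_{r+3}^-$ contains a $tK_4^-$, so $\Gamma$ is $K_{r+3}^-$-free and $\omega_b(-\Gamma)\le r+2$. Your direct count needs only the special case of negated balanced cliques, where it is immediate.

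The substitute argument you give instead does not hold up as written. A global bound $\omega_b(-\Gamma)\le 3$ is false: an all-negative $K_4$ in $\Gamma$ is a single $K_4^-$, which is permitted for $t\ge 2$. The target $\omega_b(-\Gamma)\le 2$ \emph{up to a bounded exception} is also false: take $\Gamma$ to be a disjoint union of linearly many negative triangles. Deleting at most $t-1$ vertices to reach $\omega_b\le 3$ is fine, but transferring the bound back fails twice. First, Cauchy interlacing gives $\lambda_1(-\Gamma)\ge\lambda_1(-\Gamma-S)$, which is the wrong direction for carrying an upper bound from $-\Gamma-S$ to $-\Gamma$. Second, a general perturbation (Weyl) bound for re-attaching a bounded set $S$ gives an additive $O(\sqrt{|S|\,n})$, not $O(1)$. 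The resulting estimate $\frac{2n}{3}+O(\sqrt{n})<n-2$ would still suffice, so the detour is repairable. It is unnecessary, though: replace it with the short bound that follows from your own $K_4$ count.
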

\begin{proof}
	Assume that $\Gamma$ has the maximum spectral radius among all $t\mathcal{K}_{4}^{-}$-free unbalanced signed graphs of order $n$. Note that $\Gamma_{r,n}$ is $t\mathcal{K}_{4}^{-}$-free. Then $\rho(\Gamma)\ge \rho(\Gamma_{r,n})>n-2$. If $\rho(\Gamma)\ne \lambda_1(\Gamma)$, then $\rho(\Gamma)= -\lambda_n(\Gamma)$. Since $K_{r+3}^{-}$ contains a $tK_{4}^{-}$, we obtain that $\Gamma$ is $K_{r+3}^{-}$-free. Hence, $-\Gamma$ contains no $(r+3)$-vertex balanced clique. Then $\omega_{b}(-\Gamma)\le r+2$. By Lemma \ref{Wang-Yan-Qian-LAA-2021}, we obtain that
	\[n-2<\rho(\Gamma)=-\lambda_n(\Gamma)=\lambda_1(-\Gamma)\le n\left(1-\frac{1}{\omega_{b}(-\Gamma)}\right)\le \frac{r+1}{r+2}n,\]
	which contradicts $r=\frac{1+\sqrt{8t-7}}{2}\le\left\lfloor \frac{n}{2}\right\rfloor-2$. Note that
	\[t\le \frac{\left(2\left\lfloor \frac{n}{2}\right\rfloor-5\right)^2+7}{8}\le\begin{pmatrix}
		n-2\\2
	\end{pmatrix}\text{~for~} n\ge 4.\]
	By Theorem \ref{tK4}, this completes the proof. \qed
\end{proof}


\begin{thebibliography}{99}
	\small{
		\bibitem{Brouwer-Haemers-Book-2011}
		A. Brouwer, W. Haemers, Spectra of Graphs. Springer, New York, 2011.
		
		\bibitem{Brunetti-Stanic-CAM-2022}
		M. Brunetti, Z. Stani{\'c}, Unbalanced signed graphs with extremal spectral radius or index, Comput. Appl. Math. 41 (2022) 118.
		
		\bibitem{Cam-Sei-Tsa-Algebra-1994}
		P. J. Cameron, J. J. Seidel, S. V. Tsaranov, Signed graphs, root lattices, and Coxeter groups, J. Algebra 164 (1) (1994) 173--209.
		
		\bibitem{Cartwright-Harary-Psy-1956}
		D. Cartwright, F. Harary, Structural balance: a generalization of Heider's theory, Psychol. Rev. 63 (1956) 277--293.
		
		\bibitem{Chaiken-SIAM-1982}
		S. Chaiken, A combinatorial proof of the all minors matrix tree theorem, SIAM J. Algebraic Discrete Methods 3 (1982) 319--329.
		
		\bibitem{Chen-Yuan-AMC-2024}
		F. Chen, X. Yuan, Tur{\'a}n problem for $\mathcal{K}_4^-$-free signed graphs, Appl. Math. Comput. 477 (2024) 128814.
		
		\bibitem{Chen-Lei-Li-EJC-2025}
		G. Chen, X. Lei, S. Li, The exact Tur{\'a}n number of disjoint graphs--a generalization of Simonovits' theorem, and beyond, European J. Combin. 130 (2025) 104226.
		
		\bibitem{Har-MichM-1953}
		F. Harary, On the notion of balance of a signed graph, Michigan Math. J. 2 (2) (1953) 143--146.
		
		\bibitem{Hei-JP-1946}
		F. Heider, Attitudes and cognitive organization, J. Psychol. 21 (1) (1946) 107--112.
		
		\bibitem{Hou-Tang-Wang-AMC-2019}
		Y. Hou, Z. Tang, D. Wang, On signed graphs with just two distinct Laplacian eigenvalues, Appl. Math. Comput. 351 (2019) 1--7.
		
		\bibitem{Li-Qin-Ar-2025}
		D. Li, M. Qin, The index of $t\mathcal{C}_{3}^-$-free signed graphs, arXiv:2512.07579 (2025).
		
		\bibitem{Nikiforov-LAA-2007}
		V. Nikiforov, Bounds on graph eigenvalues II. Linear Algebra Appl. 427 (2007) 183--189.
		
		\bibitem{Nikiforov-LAA-2010}
		V. Nikiforov, The spectral radius of graphs without paths and cycles of specified length, Linear Algebra Appl. 432 (2010) 2243--2256.
		
		\bibitem{Nikiforov-LAA-2017}
		V. Nikiforov, The spectral radius of graphs with no $K_{2,t}$ minor, Linear Algebra Appl. 531 (2017) 510--515.
		
		\bibitem{Ni-Wang-Kang-EJC-2023}
		Z. Ni, J. Wang, L. Kang, Spectral extremal graphs for disjoint cliques, Electron. J. Combin. 30 (1) (2023) P1.20.
		
		\bibitem{Sun-Liu-Lan-LAA-2022}
		G. Sun, F. Liu, K. Lan, A note on eigenvalues of signed graphs, Linear Algebra Appl. 652 (2022) 125--131.
		
		\bibitem{Wang-Hou-Li-LAA-2024}
		D. Wang, Y. Hou, D. Li, Extremal results for $C_3^-$-free signed graphs, Linear Algebra Appl. 681 (2024) 47--65.
		
		\bibitem{Wang-Hou-Huang-DAM-2025}
		J. Wang, Y. Hou, X. Huang, Tur{\'a}n problem of signed graph for negative odd cycle, Discrete Appl. Math. 362 (2025) 157--166.
		
		\bibitem{Wang-Yan-Qian-LAA-2021}
		W. Wang, Z. Yan, J. Qian, Eigenvalues and chromatic number of a signed graph, Linear Algebra Appl. 619 (2021) 137--145.
		
		\bibitem{Wang-LAA-2024}
		Y. Wang, Spectral Tur{\'a}n problem for $\mathcal{K}^-_5$-free signed graphs, Linear Algebra Appl. 691 (2024) 96--108.
		
		\bibitem{Wang-Lin-DAM-2024}
		Y. Wang, H. Lin, The largest eigenvalue of $\mathcal{C}^-_k$-free signed graphs, Discrete Appl. Math. 372 (2025) 164--172.
		
		\bibitem{Xiong-Hou-AC-2024}
		Z. Xiong, Y. Hou, Extremal results for $\mathcal{K}^-_{r+1}$-free unbalanced signed graphs, Ars Combin. 161 (2024) 61--73.
		
		\bibitem{Yuan-Wang-Zhai-EJLA-2012}
		W. Yuan, B. Wang, M. Zhai, On the spectral radii of graphs without given cycles, Electron. J. Linear Algebra 23 (2012) 599--606.
		
		\bibitem{Zaslavsky-DAM-1982}
		T. Zaslavsky, Signed graphs, Discrete Appl. Math. 4 (1) (1982) 47--74.
		
		\bibitem{Zaslavsky-book-2010}
		T. Zaslavsky, Matrices in the theory of signed simple graphs, in Advances in Discrete Mathematics and Applications: Mysore, 2008, 207--229, Ramanujan Math. Soc. Lect. Notes Ser., 13, Ramanujan Math. Soc., Mysore, 2010.
		
		\bibitem{Zaslavsky-EJC-2018}
		T. Zaslavsky, A mathematical bibliography of signed and gain graphs and allied areas, Electron. J. Combin. 5 (2018), Dynamic Surveys 8, 524 pages.
		
		
	}
\end{thebibliography}
\end{document}